\newcommand{\fr}{\mathfrak}
\newcommand{\op}{\operatorname}
 \newtheorem{lemma} {Lemma} [section]
\newtheorem{theorem}[lemma]{Theorem} 
\newtheorem{remark}[lemma] {Remark} 
\newtheorem{prop} [lemma]{Proposition}  
\newtheorem{corol}[lemma] {Corollary}
\numberwithin{equation}{section}
\begin{document}

\title{On the transitivity of Lie ideals and a characterization of perfect Lie algebras}
\author{Nikolaos Panagiotis Souris}
\address{University of Patras, Department of Mathematics, University Campus, 26504, Rio Patras, Greece}
\email{nsouris@upatras.gr}

\maketitle
\medskip

\begin{abstract}
We explore general intrinsic and extrinsic conditions that allow the transitivity of the relation of being a Lie ideal, in the sense that if a Lie algebra $\fr{h}$ is a subideal of a Lie algebra $\fr{g}$ (i.e. there exist Lie subalgebras $\fr{l}_0,\fr{l}_1,\dots,\fr{l}_n$ of $\fr{g}$ with $\fr{h}=\fr{l}_0\unlhd \fr{l}_1 \unlhd\cdots \unlhd \fr{l}_n=\fr{g}$), then $\fr{h}$ is an ideal of $\fr{g}$. We also prove that perfect Lie algebras of arbitrary dimension and over any field are intrinsically characterized by transitivity of this type; In particular, we show that a Lie algebra $\fr{h}$ is perfect (i.e. $\fr{h}=[\fr{h}, \fr{h}]$) if and only if for any Lie algebra $\fr{g}$ such that $\fr{h}$ is a subideal of $\fr{g}$, it follows that $\fr{h}$ is an ideal of $\fr{g}$.

\medskip
\noindent  {\it Mathematics Subject Classification 2020.} 17B05; 17B40; 22E60.

\medskip
\noindent {\it Keywords}:  transitivity of ideals; perfect Lie algebra; complete Lie algebra; compactly embedded subalgebra; self-normalizing subalgebra.

\medskip
 \end{abstract}

\section{Introduction}

Although the normality relation in groups is not transitive in general, there exist structural conditions that allow the transitivity of normality in both finite and infinite groups (c.f. \cite{As-He}, \cite{Ro}, \cite{Ru} and references therein). For Lie algebras, an analogous question concerns the transitivity of the relation of being an ideal. A Lie algebra $\fr{h}$ is called a \emph{subideal} of a Lie algebra $\fr{g}$ (or a \emph{subinvariant} subalgebra of $\fr{g}$) if there exist finitely many Lie subalgebras $\fr{l}_0,\fr{l}_1,\dots,\fr{l}_n$ of $\fr{g}$ such that

 \begin{equation}\label{subidealrel}\fr{h}=\fr{l}_0\unlhd \fr{l}_1 \unlhd\cdots \unlhd \fr{l}_n=\fr{g},\end{equation}

\noindent where $\unlhd$ denotes the relation of being an ideal. Schenkman developed in \cite{Sh} a theory of finite-dimensional Lie subideals, and a recent generalization to Leibniz algebras is given in \cite{MiSiYu}. Subideals of Lie superalgebras have been studied in \cite{Sici}.  This article explores some general conditions that allow the transitivity of relation \eqref{subidealrel}, in the sense that if $\fr{h}$ is a subideal of a Lie algebra $\fr{g}$ then $\fr{h}$ is an ideal of $\fr{g}$.  We regard such transitivity conditions through the following questions.\\

\textbf{I}. Intrinsic conditions for $\fr{g}$: Conditions that depend only on the structure of $\fr{g}$, such that every subideal $\fr{h}$ of $\fr{g}$ is an ideal.\\

\textbf{II}. Intrinsic conditions for $\fr{h}$: Conditions that depend only on the structure of $\fr{h}$, such that whenever $\fr{h}$ is embedded as a subideal of some Lie algebra $\fr{g}$, then $\fr{h}$ is an ideal of $\fr{g}$.\\  

\textbf{III}. Extrinsic conditions for $\fr{h}$: Conditions that depend on the embedding of $\fr{h}$ as a subalgebra of $\fr{g}$, such that if $\fr{h}$ is a subideal of $\fr{g}$ then $\fr{h}$ is an ideal of $\fr{g}$.\\

In \cite{Ge-Mu}, the authors address question \textbf{I}. through the notion of a \emph{$\fr{t}$-algebra}, namely a Lie algebra $\fr{g}$ such that any subideal of $\fr{g}$ is an ideal of $\fr{g}$. They show that any $\fr{t}$-algebra over a field of characteristic zero is the direct sum $\fr{s}\oplus \fr{r}$, where $\fr{s}$ is a semisimple ideal and $\fr{r}$ is an abelian or almost abelian Lie algebra (see also \cite{Av} for the structure of almost abelian Lie algebras). Some related classes to $\fr{t}$-algebras are studied in \cite{Va}. In \cite{Kurda}, question \textbf{I}. is addressed in the context of Leibniz algebras.

For question \textbf{II}, recall that a Lie algebra $\fr{h}$ is called \emph{perfect} if it equals its commutator ideal $[\fr{h},\fr{h}]$, i.e. the ideal generated by elements of the form $[X,Y]$ for $X,Y\in \fr{h}$. Being a natural generalization of semisimple Lie algebras (at least in characteristic zero), but lacking such an explicitly understood structure, perfect Lie algebras are actively studied in terms of their algebraic and structural properties (\cite{BenS}, \cite{BuDeMo}, \cite{SuZhu}), cohomology (\cite{BuWa}), generalized classes (\cite{Pir}), representations (\cite{Bara}) as well as their interactions with mathematical physics (\cite{Sal}, \cite{Stu}). We answer question \textbf{II} in Section \ref{SecPer}, showing that perfect Lie algebras are precisely those Lie algebras $\fr{h}$ such that whenever $\fr{h}$ is embedded as a subideal of some Lie algebra $\fr{g}$, then $\fr{h}$ is an ideal of $\fr{g}$; If we make no distinction between a Lie algebra $\fr{h}$ and its isomorphic image under an injective Lie homomorphism, our main result, Theorem \ref{mainperfect}, is stated as follows.

\begin{theorem}\label{mainperfect2} Let $\fr{h}$ be a Lie algebra over a field $\mathbb K$.  The following are equivalent:\\
\noindent \emph{(i)} The Lie algebra $\fr{h}$ is perfect, i.e. $\fr{h}=[\fr{h},\fr{h}]$.\\ 
\noindent \emph{(ii)} For all Lie algebras $\fr{k}$, $\fr{g}$ over $\mathbb K$ such that $\fr{h}\unlhd\fr{k}\unlhd \fr{g}$, it follows that $\fr{h}\unlhd\fr{g}$. \end{theorem}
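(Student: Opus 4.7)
The plan is to prove the two implications separately; the forward direction is a short Jacobi-identity computation, while the converse requires explicitly constructing a counterexample Lie algebra from the failure of perfectness.

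For $(i)\Rightarrow(ii)$, suppose $\fr{h}$ is perfect and $\fr{h}\unlhd \fr{k}\unlhd \fr{g}$. Any $H\in \fr{h}$ can be written as $H=\sum_i [A_i,B_i]$ with $A_i,B_i\in \fr{h}$, and for $X\in \fr{g}$ the Jacobi identity gives $[X,H]=\sum_i\bigl([[X,A_i],B_i]+[A_i,[X,B_i]]\bigr)$. Since $\fr{k}\unlhd \fr{g}$, the inner brackets $[X,A_i]$ and $[X,B_i]$ lie in $\fr{k}$, and since $\fr{h}\unlhd \fr{k}$, each resulting term lies in $\fr{h}$; hence $[X,H]\in \fr{h}$.

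For $(ii)\Rightarrow(i)$ I would argue by contrapositive and build an explicit witness. Assuming $\fr{h}$ is not perfect, the abelianization $\fr{h}/[\fr{h},\fr{h}]$ is a nonzero vector space, so a basis-extension argument produces a linear functional $\phi:\fr{h}\to \mathbb K$ that vanishes on $[\fr{h},\fr{h}]$ but is not identically zero. Let $\fr{k}=\fr{h}\oplus \mathbb K v$ be the trivial central extension, with $v$ central; then $\fr{h}\unlhd \fr{k}$ since $[\fr{k},\fr{h}]=[\fr{h},\fr{h}]\subseteq \fr{h}$. Define $D:\fr{k}\to \fr{k}$ by $D(X)=\phi(X)v$ for $X\in \fr{h}$ and $D(v)=0$. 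The essential calculation is that $D$ is a derivation of $\fr{k}$: on $[X,Y]$ with $X,Y\in \fr{h}$ one has $D([X,Y])=\phi([X,Y])v=0$ because $\phi$ kills $[\fr{h},\fr{h}]$, and the Leibniz side vanishes automatically since $v$ is central. Form the semidirect product $\fr{g}=\fr{k}\rtimes \mathbb K D$; then $\fr{k}\unlhd \fr{g}$ by construction, and choosing $X\in \fr{h}$ with $\phi(X)\neq 0$ gives $[D,X]=\phi(X)v\notin \fr{h}$, so $\fr{h}\not\unlhd \fr{g}$.

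The conceptual obstacle is precisely this construction. One must manufacture, from the single piece of information that $\fr{h}/[\fr{h},\fr{h}]\neq 0$, a Lie algebra in which normality genuinely fails one step above $\fr{k}$. The device is to use the abelianization twice: first to introduce the central complement $v$, which forces $\fr{h}\unlhd \fr{k}$ for free, and then to couple $\fr{h}$ to $\mathbb K v$ via a derivation whose existence is guaranteed exactly by $\phi$ descending to $\fr{h}/[\fr{h},\fr{h}]$. This compatibility between ``$\phi$ vanishes on $[\fr{h},\fr{h}]$'' and ``$D$ is a derivation'' is what closes the argument.
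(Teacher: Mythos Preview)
Your proof is correct. Both implications are handled cleanly, and the construction for $(ii)\Rightarrow(i)$ is airtight: $D$ is a derivation precisely because $\phi$ vanishes on commutators, and the semidirect product $\fr{g}=\fr{k}\rtimes\mathbb{K}D$ witnesses the failure of $\fr{h}\unlhd\fr{g}$.

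The paper's argument follows the same blueprint but packages it differently. Instead of a one-dimensional central complement $\mathbb{K}v$ and a single functional $\phi$, the paper takes $\fr{k}=\fr{h}\oplus(\fr{h}/[\fr{h},\fr{h}])$ with the full abelianization as the abelian summand, and the derivation is $f(X,Y)=(0,\pi(X))$ built from the canonical projection $\pi$ rather than a chosen $\phi$. Then, rather than forming $\fr{g}=\fr{k}\rtimes\mathbb{K}D$ directly, the paper routes through an intermediate lemma: any ideal satisfying condition~(ii) must be \emph{characteristic} in every ambient algebra (proved by embedding $\fr{k}$ into its holomorph $\fr{k}\rtimes D(\fr{k})$). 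The contradiction is then that $f$ fails to preserve $\fr{h}\oplus\{0\}$. Your construction is the ``minimal slice'' of theirs: you pick out one dimension of the abelianization via $\phi$ and one dimension of the derivation algebra via $D$, and you bypass the characteristic-ideal lemma by writing down $\fr{g}$ explicitly. The trade-off is that your approach is more direct and produces a smaller counterexample, while the paper's avoids the basis-extension step (and hence any appeal to choice in infinite dimensions) and isolates the characteristic-ideal statement as a result of independent interest.
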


 The fact that (i) implies (ii) is known for finite dimensions (see e.g. \cite{MiSiYu}) but essentially follows from the Jacobi identity irrespectively of the dimension.  We prove the converse implication by firstly establishing in Lemma \ref{characteristic} that if an ideal of a Lie algebra satisfies the transitivity condition (ii) then it is \emph{characteristic} (i.e. invariant by derivations of the ambient algebra), and then for any non-perfect Lie algebra we construct a natural extension that does not involve the algebra as a characteristic ideal. We also prove a weaker version of the transitivity property of Theorem \ref{mainperfect2} for \emph{complete} Lie algebras, namely Lie algebras with trivial center and only inner derivations (Proposition \ref{complete}). We stress that the aforementioned results are valid regardless of the dimension, finite or infinite. As an application, by proving a generalized version of the main result of Y. Su and L. Zhu in \cite{SuZhu} (Theorem \ref{derived}), we show how the completeness of derivation algebras of centerless perfect Lie algebras can be essentially recovered from the transitivity condition (ii) (Remark \ref{SuShu}).

In Section \ref{SecCom}, we address question \textbf{III.} for finite-dimensional Lie algebras in characteristic zero. We show that the property for a subideal to be an ideal depends on the embedding of the \emph{radical} (i.e. the maximal solvable ideal) of the subideal in the ambient algebra (Proposition \ref{Leviconsequence}). Then we focus on subalgebras of real Lie algebras, defined by the skew-symmetry of their adjoint operators with respect to symmetric bilinear forms (Theorem \ref{picture}). Our prototype example is the wide class of \emph{compactly embedded} subalgebras (\cite{HilHof}, \cite{Wu}), which includes the Lie algebras of compact Lie subgroups and the isotropy algebras of Riemannian homogeneous manifolds. We prove that any compactly embedded subideal is an ideal (Corollary \ref{CorollaryCompactly}).  We also prove that any subideal containing an eigenspace of a Cartan involution in a real semisimple Lie algebra is an ideal (Corollary \ref{CartanSemi}).

Apart from the aspect of transitivity itself, questions \textbf{I.} - \textbf{III.} are naturally connected to the existence of self-normalizing subalgebras of Lie algebras. The \emph{normalizer} of $\fr{h}$ in $\fr{g}$ is the maximal subalgebra $N_{\fr{g}}(\fr{h})$ of $\fr{g}$ containing $\fr{h}$ as an ideal, while $\fr{h}$ is called \emph{self-normalizing} in $\fr{g}$ if $\fr{h}=N_{\fr{g}}(\fr{h})$.  The maximality of the normalizer allows subalgebras that satisfy general transitivity conditions to possess self-normalizing normalizers. A simple application of our results in Section \ref{GeodesicOrbit} yields the existence of large classes of self-normalizing subalgebras of Lie algebras (Theorem \ref{SelfNormalizingTh}).  In addition to their algebraic implications (e.g. \cite{Si}), self-normalizing subalgebras have interactions with geometry; For example, they appear as components of isometry algebras of some special homogeneous manifolds, the geodesic orbit manifolds (c.f. Corollary 1.12 in \cite{Sou}), the classification of which is a longstanding open problem in Riemannian geometry (we refer the interested reader to the recent monograph \cite{BerNik} on the subject). In this regard, we stress that part (v) of Theorem \ref{SelfNormalizingTh}, combined with results in \cite{Sou}, can be applied for the computation of isometry algebras of geodesic orbit metrics in compact simple Lie groups.

\section{The transitivity characterization of perfect Lie algebras}\label{SecPer}
   
The Lie algebras in this section are defined over an arbitrary field and have no dimensional restrictions. The initial aim of this section is to prove the following characterization of perfect Lie algebras. 

\begin{theorem}\label{mainperfect} Let $\fr{h}$ be a Lie algebra over a field $\mathbb K$. The following are equivalent.\\
\noindent \emph{(i)} The Lie algebra $\fr{h}$ is perfect.\\ 
\noindent \emph{(ii)} For any Lie algebra  $\fr{g}$ over $\mathbb K$ and for any injective Lie algebra homomorphism $i:\fr{h}\rightarrow \fr{g}$  such that $i(\fr{h})$ is a subideal of $\fr{g}$, it follows that $i(\fr{h})$ is an ideal of $\fr{g}$.\end{theorem}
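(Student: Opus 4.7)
The plan is to prove the two implications separately. The direction (i) $\Rightarrow$ (ii) should follow dimension-freely from the Jacobi identity, whereas (ii) $\Rightarrow$ (i) calls for a contrapositive construction: for each non-perfect $\fr{h}$, I must exhibit an explicit ambient Lie algebra $\fr{g}$ in which $\fr{h}$ appears as a subideal but not as an ideal.

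For (i) $\Rightarrow$ (ii), an induction on the length of the subideal chain reduces the problem to the two-step case $\fr{h} \unlhd \fr{k} \unlhd \fr{g}$. Given $X \in \fr{h}$, I use perfection to write $X = \sum_i [X_i, Y_i]$ with $X_i, Y_i \in \fr{h}$, and for any $Z \in \fr{g}$ expand via the Jacobi identity
\[
  [[X_i, Y_i], Z] \;=\; [[X_i, Z], Y_i] + [X_i, [Y_i, Z]].
\]
Since $\fr{k} \unlhd \fr{g}$, the elements $[X_i, Z]$ and $[Y_i, Z]$ lie in $\fr{k}$; then since $\fr{h} \unlhd \fr{k}$, bracketing them against $X_i, Y_i \in \fr{h}$ lands in $\fr{h}$. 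Summing yields $[X, Z] \in \fr{h}$.

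For (ii) $\Rightarrow$ (i) I argue by contraposition. Assume $\fr{h}$ is not perfect, so the abelianization $V := \fr{h}/[\fr{h},\fr{h}]$ is non-zero; I fix an external vector space copy of $V$ disjoint from $\fr{h}$, write $\pi: \fr{h} \to V$ for the canonical projection, and form $\fr{k} := \fr{h} \oplus V$ as a Lie algebra by making $V$ central, so that $\fr{h} \unlhd \fr{k}$. I then define $D: \fr{k} \to \fr{k}$ by $D(X + v) := \pi(X)$. Because $\pi$ vanishes on $[\fr{h},\fr{h}]$ and $V$ is central in $\fr{k}$, a direct verification shows $D$ is a derivation of $\fr{k}$. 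Passing to the semidirect product $\fr{g} := \fr{k} \rtimes \mathbb{K}D$ (in which $\fr{k}$ is an ideal) gives the subideal chain $\fr{h} \unlhd \fr{k} \unlhd \fr{g}$; yet for any $X \in \fr{h}$ with $\pi(X) \neq 0$ we compute $[D, X] = \pi(X) \in V \setminus \fr{h}$, so $\fr{h}$ is not an ideal of $\fr{g}$, contradicting (ii).

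Equivalently, once Lemma \ref{characteristic} is invoked, the conclusion is conceptual: that lemma forces any $\fr{h}$ satisfying (ii) to be characteristic in every Lie algebra containing it as an ideal, and the derivation $D$ above shows $\fr{h}$ is manifestly not characteristic in $\fr{k}$. The main obstacle I anticipate is organizational rather than technical, namely treating $V$ as an external copy of the abelianization (so that literally $\pi(X) \notin \fr{h}$) and checking the Leibniz rule for $D$; both reduce to the centrality of $V$ in $\fr{k}$ and the defining property of the abelianization. The entire construction is functorial in $\fr{h}$ and makes no use of finite-dimensionality or field hypotheses, matching the generality claimed in the statement.
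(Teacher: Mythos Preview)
Your proof is correct and follows essentially the same route as the paper: both directions match, and for (ii) $\Rightarrow$ (i) you build the same $\fr{k}=\fr{h}\oplus(\fr{h}/[\fr{h},\fr{h}])$ and the same derivation $D$ (the paper's $f$). The only cosmetic difference is that you adjoin the single derivation via $\fr{g}=\fr{k}\rtimes\mathbb{K}D$, whereas the paper routes through Lemma~\ref{characteristic} and the full holomorph $\fr{k}\rtimes D(\fr{k})$; you yourself note the equivalence, and the underlying idea is identical.
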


\begin{remark} If $\fr{h}$ is a Lie algebra satisfying condition (ii) of Theorem \ref{mainperfect}, then any Lie algebra isomorphic to $\fr{h}$ also satisfies this condition.  Thus we will henceforth make no distinction between a Lie algebra $\fr{h}$ and its isomorphic image $i(\fr{h})$ under an injective Lie homomorphism $i:\fr{h}\rightarrow \fr{g}$, when it is clear from the context. Moreover, a simple inductive argument shows that a Lie algebra $\fr{h}$ satisfies condition (ii) of Theorem \ref{mainperfect} if and only if for any pair of Lie algebras $\fr{k}, \fr{g}$ such that $\fr{h}\unlhd \fr{k}\unlhd \fr{g}$, it follows that $\fr{h}\unlhd \fr{g}$. Therefore, theorems \ref{mainperfect} and \ref{mainperfect2} have equivalent statements.\end{remark}

Recall that a \emph{derivation} of a Lie algebra $\fr{h}$ is a linear endomorphism $f$ of $\fr{h}$ satisfying $f([X,Y])=[f(X),Y]+[X,f(Y)]$ for all $X,Y\in \fr{h}$.  Let $(D(\fr{h}), [\ , \ ]_D)$ denote the Lie algebra of derivations of $\fr{h}$.  The \emph{holomorph} of $\fr{h}$ is defined as the semidirect product

\[ H(\fr{h}):=\fr{h}\rtimes D(\fr{h}),\]

\noindent equipped with the Lie bracket

\begin{equation*}\label{brackethol}\big[(X,f),(Y,g)\big]_{H}:=\big([X,Y]+f(Y)-g(X),\ [f,g]_D\big),\end{equation*}  \\

\noindent for $(X,f),(Y,g)\in H(\fr{h})$, $X,Y\in \fr{h}$ and $f,g\in D(\fr{h})$.  The Lie algebra $\fr{h}$ can be embedded as an ideal of its holomorph via the isomorphism $\fr{h}\rightarrow \fr{h} \rtimes \{0\}$, with $X\mapsto (X,0)$, $X\in \fr{h}$. We will need the following.

\begin{lemma}\label{characteristic}Let $\fr{k}$ be a Lie algebra and let $\fr{h}$ be an ideal of $\fr{k}$ satisfying condition (ii) of Theorem \ref{mainperfect2}.  Then $\fr{h}$ is a characteristic ideal of $\fr{k}$, i.e. any derivation of $\fr{k}$ leaves $\fr{h}$ invariant.\end{lemma}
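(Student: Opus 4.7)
The natural tool here is the holomorph $H(\fr{k})=\fr{k}\rtimes D(\fr{k})$ introduced just before the lemma, since it packages every derivation of $\fr{k}$ into a Lie bracket structure in which $\fr{k}$ sits as an ideal. My plan is to exploit this to express ``leaves $\fr{h}$ invariant'' as ``belongs to $\fr{h}$ under a Lie bracket,'' and then let condition (ii) do the work.

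First, I would identify $\fr{k}$ with its image $\fr{k}\rtimes\{0\}$ inside $H(\fr{k})$ and note, directly from the bracket formula, that $\fr{k}\unlhd H(\fr{k})$: indeed, for $X\in\fr{k}$ and $(Y,g)\in H(\fr{k})$ we have
\[
\bigl[(X,0),(Y,g)\bigr]_H=\bigl([X,Y]-g(X),\,0\bigr)\in\fr{k}\rtimes\{0\}.
\]
Combining this with the hypothesis $\fr{h}\unlhd \fr{k}$ yields the chain $\fr{h}\unlhd\fr{k}\unlhd H(\fr{k})$, so $\fr{h}$ is a subideal of $H(\fr{k})$. Here I am using the identification mentioned in the remark after Theorem \ref{mainperfect} to regard $\fr{h}$ as a subalgebra of $H(\fr{k})$.

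Next, I would invoke condition (ii), which promotes this subideal to a full ideal: $\fr{h}\unlhd H(\fr{k})$. Given any derivation $f\in D(\fr{k})$ and any $X\in\fr{h}$, the bracket in $H(\fr{k})$ computes as
\[
\bigl[(0,f),(X,0)\bigr]_H=\bigl(f(X),\,0\bigr),
\]
and since $\fr{h}$ is an ideal of $H(\fr{k})$, this element must lie in $\fr{h}\rtimes\{0\}$. Reading off the first coordinate gives $f(X)\in\fr{h}$, proving that $f$ leaves $\fr{h}$ invariant; as $f$ was arbitrary, $\fr{h}$ is characteristic in $\fr{k}$.

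There is essentially no substantive obstacle in this argument—the holomorph is tailor-made for such an application, and the only verifications required are the elementary bracket computations above. The only mild subtlety is the justification of identifying $\fr{h}$ with its isomorphic copy inside $H(\fr{k})$ before applying hypothesis (ii), which is already handled by the remark following Theorem \ref{mainperfect}.
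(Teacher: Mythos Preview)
Your proof is correct and follows essentially the same route as the paper: both use the holomorph $H(\fr{k})$ to obtain the subideal chain $\fr{h}\unlhd\fr{k}\unlhd H(\fr{k})$, apply condition (ii) to promote $\fr{h}$ to an ideal of $H(\fr{k})$, and then read off $f(X)\in\fr{h}$ from the bracket with $(0,f)$. Your version merely adds a few more details (the explicit verification that $\fr{k}\unlhd H(\fr{k})$ and the remark about identifying $\fr{h}$ with its image), but the argument is the same.
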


\begin{proof} Let $f\in D(\fr{k})$ and $X\in \fr{h}$. To verify that $f(X)\in \fr{h}$, consider the subideal sequence $\fr{h} \rtimes \{0\}\unlhd \fr{k} \rtimes \{0\}\unlhd H(\fr{k})$.  Our hypothesis implies that $\fr{h} \rtimes \{0\}\unlhd H(\fr{k})$.  Therefore, $\big[(X,0),(0,f)\big]_H\in \fr{h} \rtimes \{0\}$, which yields $(-f(X),0)\in \fr{h} \rtimes \{0\}$, i.e. $f(X)\in \fr{h}$.    \end{proof}


We are now ready to prove Theorem \ref{mainperfect2}. \\

\noindent \emph{Proof of Theorem \ref{mainperfect2}.}  The theorem can be immediately verified if $\fr{h}$ is trivial.  Suppose that $\fr{h}$ is not trivial, and assume firstly that $\fr{h}$ is perfect. To prove that $\fr{h}$ satisfies condition (ii) of the theorem, assume that $\fr{h}\unlhd \fr{k}\unlhd \fr{g}$ for some Lie algebras $\fr{k}$ and $\fr{g}$. Then $[\fr{h},\fr{g}]\subseteq \fr{k}$ and $[\fr{h}, \fr{k}]\subseteq \fr{h}$.  Since $\fr{h}=[\fr{h},\fr{h}]$, the Jacobi identity yields 

\begin{equation*}\label{Jacper}[\fr{h},\fr{g}]=\big[[\fr{h},\fr{h}],\fr{g}\big]\subseteq \big[\fr{h},[\fr{h},\fr{g}]\big]\subseteq [\fr{h},\fr{k}]\subseteq \fr{h}.\end{equation*}

\noindent Therefore, $\fr{h}$ is an ideal of $\fr{g}$.  Conversely, assume that $\fr{h}$ satisfies condition (ii) of the theorem and suppose on the contrary that $\fr{h}$ is not perfect.  Then the Lie algebra $\fr{h}/[\fr{h},\fr{h}]$ is non-trivial and abelian.  Consider the Lie algebra  

\[ \fr{k}:=\fr{h}\oplus \big(\fr{h}/[\fr{h},\fr{h}]\big),\]  
 
\noindent the canonical projection $\pi:\fr{h}\rightarrow \fr{h}/[\fr{h},\fr{h}]$ and the linear endomorphism $f:\fr{k}\rightarrow \fr{k}$ with 

\begin{equation*}f\big((X,Y)\big):=(0,\pi(X)),\end{equation*}

\noindent for $X\in \fr{h}$, $Y\in \fr{h}/[\fr{h},\fr{h}]$.  Denote by $[ \ , \ ]_{\fr{k}}$ the induced Lie bracket on $\fr{k}$.  Then for all $(X,Y), (Z,W)\in \fr{k}$, with $X,Z\in \fr{h}$, and $Y,W\in \fr{h}/[\fr{h},\fr{h}]$, we have on the one hand   

\begin{equation}\label{LH}f\big(\big[(X,Y),(Z,W)\big]_{\fr{k}}\big)=f\big(([X,Z], [Y,W])\big)=\big(0,\pi([X,Z])\big)=0,\end{equation}

\noindent given that $\pi([\fr{h},\fr{h}])=\{0\}$.  On the other hand, 

\begin{equation}\label{RH}\big[f\big((X,Y)\big),(Z,W)\big]_{\fr{k}}+\big[(X,Y),f\big((Z,W)\big)\big]_{\fr{k}}=\big(0, [\pi(X),W]+[Y,\pi(Z)]\big)=0,\end{equation}

\noindent given that the Lie algebra $\fr{h}/[\fr{h},\fr{h}]$ is abelian.  Equations \eqref{LH} and \eqref{RH} imply that $f$ is a derivation of $\fr{k}$. Since $\fr{h}=\fr{h}\oplus \{0\}$ is an ideal of $\fr{k}$ that satisfies satisfies condition (ii) of Theorem \ref{mainperfect2}, Lemma \ref{characteristic} implies that 

\begin{equation}\label{contrad}f(\fr{h}\oplus \{0\})\subseteq \fr{h}\oplus \{0\}.\end{equation}

\noindent But since $\fr{h}$ is not perfect, we can choose a $X_o\in \fr{h}$ such that $\pi(X_o)\neq 0$, and thus the definition of $f$ yields $f\big((X_o,0)\big)=(0,\pi(X_o))\notin  \fr{h}\oplus \{0\}$, contradicting relation \eqref{contrad}. We conclude that $\fr{h}$ is perfect. \qed \\

We remark that by combining Lemma \ref{characteristic} with the above proof of Theorem \ref{mainperfect2}, we arrive to the following equivalent characterization of perfect Lie algebras.

\begin{corol}\label{mainperfect3} Let $\fr{h}$ be a Lie algebra over a field $\mathbb K$.  The following are equivalent:\\
\noindent \emph{(i)} The Lie algebra $\fr{h}$ is perfect.\\ 
\noindent \emph{(ii)} For all Lie algebras $\fr{g}$, if $\fr{h}$ is an ideal of $\fr{g}$ then $\fr{h}$ is a characteristic ideal of $\fr{g}$. \end{corol}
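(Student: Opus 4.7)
The plan is to observe that Corollary \ref{mainperfect3} is essentially an immediate synthesis of what has already been proved: Lemma \ref{characteristic} converts the transitivity property into derivation-invariance and hence delivers one direction for free, while the explicit Lie algebra built in the ``not perfect'' half of the proof of Theorem \ref{mainperfect2} is already an ideal that fails to be characteristic, giving the other direction.

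For (i) $\Rightarrow$ (ii), assume $\fr{h}$ is perfect and suppose $\fr{h}\unlhd\fr{g}$. By Theorem \ref{mainperfect2}, $\fr{h}$ satisfies condition (ii) of that theorem, so Lemma \ref{characteristic} applied with $\fr{k}:=\fr{g}$ tells us directly that every derivation of $\fr{g}$ preserves $\fr{h}$, i.e.\ $\fr{h}$ is a characteristic ideal of $\fr{g}$. If one prefers to unfold this, the content of Lemma \ref{characteristic} is the familiar holomorph trick: the chain $\fr{h}\rtimes\{0\}\unlhd\fr{g}\rtimes\{0\}\unlhd H(\fr{g})$ makes $\fr{h}$ a subideal of $H(\fr{g})$; perfectness upgrades this to an ideal relation; and then for $f\in D(\fr{g})$, $X\in\fr{h}$ the bracket $\big[(X,0),(0,f)\big]_H=(-f(X),0)$ must live in $\fr{h}\rtimes\{0\}$, forcing $f(X)\in\fr{h}$.

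For (ii) $\Rightarrow$ (i) I would argue contrapositively, recycling the construction from the second half of the proof of Theorem \ref{mainperfect2}. Assuming $\fr{h}$ is not perfect, take $\fr{k}:=\fr{h}\oplus\big(\fr{h}/[\fr{h},\fr{h}]\big)$ with the product bracket and the linear map $f\in\op{End}(\fr{k})$ defined by $f((X,Y))=(0,\pi(X))$. Equations \eqref{LH} and \eqref{RH} there already verify that $f$ is a derivation of $\fr{k}$, and $\fr{h}\oplus\{0\}$ is visibly an ideal of $\fr{k}$. Choosing $X_o\in\fr{h}$ with $\pi(X_o)\neq 0$ yields $f((X_o,0))=(0,\pi(X_o))\notin\fr{h}\oplus\{0\}$, so $\fr{h}\oplus\{0\}$ is an ideal that is not characteristic in $\fr{k}$, contradicting (ii). No genuine obstacle remains; the only conceptual point is that Lemma \ref{characteristic} is precisely the bridge between the subideal-transitivity formulation of perfectness and its characteristic-ideal formulation, so the two equivalent characterizations collapse onto one another.
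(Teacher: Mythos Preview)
Your proposal is correct and follows exactly the route the paper indicates: the paper simply remarks that the corollary is obtained ``by combining Lemma \ref{characteristic} with the above proof of Theorem \ref{mainperfect2},'' and you have faithfully unpacked that combination, using Lemma \ref{characteristic} (with $\fr{k}=\fr{g}$) for (i)$\Rightarrow$(ii) and reusing the non-perfect construction $\fr{k}=\fr{h}\oplus(\fr{h}/[\fr{h},\fr{h}])$ together with the derivation $f$ for (ii)$\Rightarrow$(i). There is nothing to add.
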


Now assume that a Lie algebra $\fr{g}$ has trivial center.  Then all derivation algebras, defined inductively by $D^1(\fr{g}):=D(\fr{g})$, $D^{n+1}(\fr{g}):=D(D^n(\fr{g}))$, $n\in \mathbb N$, also have trivial center (see for example \cite{Sh} or \cite{SuZhu}).  Therefore, we can embed $D^n(\fr{g})$ as an ideal of $D^{n+1}(\fr{g})$ via the faithfull adjoint representation $\op{ad}:D^n(\fr{g}) \rightarrow D^{n+1}(\fr{g})$, $X\mapsto \operatorname{ad}_X$, where the algebra $D^n(\fr{g})$ is identified with the Lie algebra $\operatorname{ad}_{D^n(\fr{g})}$ of its inner derivations.  Thus one can define a sequence of ideals, the so-called \emph{derivation tower} of $\fr{g}$,  

\[ \fr{g}\unlhd D(\fr{g})\unlhd D^2(\fr{g})\unlhd \cdots \unlhd D^n(\fr{g})\unlhd \cdots .\]

\noindent A main result in \cite{Sh} shows that for a finite-dimensional Lie algebra $\fr{g}$ with trivial center, the above sequence stabilizes after a finite number of steps, i.e. there exists an $n\in \mathbb N$ such that $D^{n+1}(\fr{g})$ is isomorphic to $D^n(\fr{g})$. 

 A Lie algebra $\fr{g}$ is called \emph{complete} if it has trivial center and all its derivations are inner, that is $D(\fr{g})=\operatorname{ad}_{\fr{g}}$. Some results on complete Lie algebras can be found in \cite{Hsie}, \cite{JiMe} and the references therein. In \cite{SuZhu}, it is proved that the derivation tower of a perfect Lie algebra $\fr{g}$ with trivial center stabilizes at $n=1$, irrespectively of the dimension.  In other words, the derivation algebra $D(\fr{g})$ is complete.  

\begin{theorem}\label{SuZhu} \emph{(\cite{SuZhu})} Let $\fr{g}$ be a perfect Lie algebra with trivial center.  Then the derivation algebra $D(\fr{g})$ is complete, i.e. $D^2(\fr{g})=D(\fr{g})$. \end{theorem}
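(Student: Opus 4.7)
The plan is to deduce the theorem from Corollary \ref{mainperfect3} applied to the derivation tower. Since $\fr{g}$ is perfect and $\fr{g}\unlhd D(\fr{g})$, Corollary \ref{mainperfect3} implies that $\fr{g}$, identified with $\ad_{\fr{g}}\subseteq D(\fr{g})$ via the faithful adjoint representation, is a \emph{characteristic} ideal of $D(\fr{g})$. In particular, every $f\in D^2(\fr{g})=D(D(\fr{g}))$ leaves $\ad_{\fr{g}}$ invariant. The goal is then to show that such an $f$ is necessarily inner on $D(\fr{g})$. Using the invariance, I would define $g\colon\fr{g}\to\fr{g}$ by $\ad_{g(X)}:=f(\ad_X)$; this is well-defined because $\ad$ is injective on $\fr{g}$ (as $Z(\fr{g})=\{0\}$), and a short check shows $g\in D(\fr{g})$. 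The candidate inner derivation for $f$ is $\ad_g\in D^2(\fr{g})$.

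Set $h:=f-\ad_g\in D^2(\fr{g})$. By the standard identity $[g,\ad_X]_D=\ad_{g(X)}$ valid for any $g\in D(\fr{g})$ and $X\in\fr{g}$, one has $\ad_g(\ad_X)=\ad_{g(X)}=f(\ad_X)$, so $h$ vanishes on $\ad_{\fr{g}}$. The crucial step is propagating this to all of $D(\fr{g})$. For arbitrary $d\in D(\fr{g})$ and $X\in\fr{g}$, the Leibniz rule applied to $[d,\ad_X]_D=\ad_{d(X)}\in\ad_{\fr{g}}$ yields
\[
0=h(\ad_{d(X)})=h([d,\ad_X]_D)=[h(d),\ad_X]_D+[d,h(\ad_X)]_D=[h(d),\ad_X]_D=\ad_{h(d)(X)}.
\]
Since $\fr{g}$ is centerless, $\ad$ is injective on $\fr{g}$, so $h(d)(X)=0$ for all $X\in\fr{g}$, giving $h(d)=0$ in $D(\fr{g})$. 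Thus $h=0$ and $f=\ad_g$, showing $D^2(\fr{g})=\ad_{D(\fr{g})}$. Combined with the fact recorded just before the theorem that $Z(\fr{g})=\{0\}$ forces $Z(D(\fr{g}))=\{0\}$, one concludes that $\ad\colon D(\fr{g})\to D^2(\fr{g})$ is an isomorphism, i.e., $D(\fr{g})$ is complete.

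The main obstacle is the bookkeeping of the identifications $\fr{g}\cong\ad_{\fr{g}}\subset D(\fr{g})$ and the derivation identity $[g,\ad_X]_D=\ad_{g(X)}$; once these are in place, the transitivity theorem from Section \ref{SecPer} does essentially all the conceptual work by promoting the subideal $\fr{g}\unlhd D(\fr{g})\unlhd D^2(\fr{g})$ to a \emph{characteristic} embedding, which is precisely what is needed to restrict an arbitrary $f\in D^2(\fr{g})$ to $\fr{g}$ in the first place.
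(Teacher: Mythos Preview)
Your proof is correct and is essentially the paper's own argument (Remark \ref{SuShu} together with the proof of Theorem \ref{derived} and Lemma \ref{lem}): both use the transitivity result for perfect algebras to restrict an arbitrary $F\in D^2(\fr{g})$ to $\ad_{\fr{g}}$, read off the corresponding $g\in D(\fr{g})$, and then show $F-\ad_g$ vanishes via exactly the computation you give, which is Lemma \ref{lem}(ii). The only cosmetic difference is that you invoke Corollary \ref{mainperfect3} (characteristic ideal of $D(\fr{g})$) directly, whereas the paper phrases the same fact as $\fr{g}\unlhd D^2(\fr{g})$ via Theorem \ref{mainperfect2} and then packages the inversion as a homomorphism $\phi\colon D^2(\fr{g})\to D(\fr{g})$; your route saves one layer of $\ad$-identifications but is otherwise the same.
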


 We will show that Theorem \ref{SuZhu} can be recovered by the transitivity property of perfect Lie algebras in Theorem \ref{mainperfect2}. To this end, let us formulate a generalization of Theorem \ref{SuZhu}.

\begin{theorem}\label{derived}Let $\fr{g}$ be a Lie algebra with trivial center. Then $D(\fr{g})$ is complete if and only if $\fr{g}$ is an ideal of $D^2(\fr{g})$.  \end{theorem}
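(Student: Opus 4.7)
The plan is to build everything on the single structural identity that for any derivation $d$ of any Lie algebra $\fr{a}$ and any $X\in\fr{a}$, the derivation axiom rephrases as $[d,\op{ad}_X]_{D(\fr{a})}=\op{ad}_{d(X)}$. I will use this identity at both layers of the derivation tower of $\fr{g}$: at the first layer it shows $\fr{g}$ is always an ideal of $D(\fr{g})$, and at the second layer it converts the hypothesis $\fr{g}\unlhd D^2(\fr{g})$ into concrete information about any $F\in D^2(\fr{g})$ viewed as a derivation of $D(\fr{g})$. Throughout I identify $\fr{g}$ with $\op{ad}_{\fr{g}}\subset D(\fr{g})$ and $D(\fr{g})$ with $\op{ad}_{D(\fr{g})}\subset D^2(\fr{g})$, which is legitimate because $\fr{g}$ being centerless forces $D(\fr{g})$ to be centerless (as cited from \cite{Sh}, \cite{SuZhu}).

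The direction $(\Rightarrow)$ is essentially tautological: completeness of $D(\fr{g})$ means $D^2(\fr{g})=\op{ad}_{D(\fr{g})}$, so the tower collapses at the first step and the always-valid $\fr{g}\unlhd D(\fr{g})$ upgrades to $\fr{g}\unlhd D^2(\fr{g})$.

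For the direction $(\Leftarrow)$, I fix an arbitrary $F\in D^2(\fr{g})$ and aim to show it is inner. The ideal hypothesis combined with the second-layer instance of the key identity yields, for every $X\in\fr{g}$,
\[
[F,\op{ad}_{\op{ad}_X}^{D(\fr{g})}]_{D^2(\fr{g})}\;=\;\op{ad}_{F(\op{ad}_X)}^{D(\fr{g})}\;\in\;\op{ad}_{\op{ad}_{\fr{g}}}^{D(\fr{g})}.
\]
Injectivity of the adjoint map at the second layer then forces $F(\op{ad}_X)\in\op{ad}_{\fr{g}}$. Thus $F$ preserves the image of $\fr{g}$ inside $D(\fr{g})$ and restricts there to a well-defined $F_0\in D(\fr{g})$ characterized by $F(\op{ad}_X)=\op{ad}_{F_0(X)}$.

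It remains to prove $F=\op{ad}_{F_0}^{D(\fr{g})}$. Setting $G:=F-\op{ad}_{F_0}^{D(\fr{g})}\in D^2(\fr{g})$, the first-layer identity makes $G$ vanish on $\op{ad}_{\fr{g}}$ by construction. For any $d\in D(\fr{g})$ and $X\in\fr{g}$, I apply the derivation $G$ to the bracket $[d,\op{ad}_X]_{D(\fr{g})}=\op{ad}_{d(X)}$: the left-hand side is $G(\op{ad}_{d(X)})=0$, while the Leibniz rule collapses to $[G(d),\op{ad}_X]_{D(\fr{g})}=\op{ad}_{G(d)(X)}$ because $G(\op{ad}_X)=0$. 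Since the first-layer adjoint is injective, $G(d)(X)=0$ for every $X$, so $G(d)=0$ and hence $G=0$. The main obstacle will be notational bookkeeping---keeping straight which $\op{ad}$ lives in which layer and which bracket is meant in each formula---rather than any conceptual hurdle.
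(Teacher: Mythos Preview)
Your proof is correct and takes essentially the same route as the paper: both hinge on the identity $[d,\op{ad}_X]=\op{ad}_{d(X)}$ applied at two layers of the tower, your $F_0$ is exactly the paper's $\phi(F)$, and your verification that $G=F-\op{ad}_{F_0}^{D(\fr{g})}$ vanishes is precisely the content of the paper's Lemma~\ref{lem}(ii). The only cosmetic difference is that the paper packages the latter as a separate lemma and phrases the conclusion as constructing an isomorphism $\phi:D^2(\fr{g})\to D(\fr{g})$, whereas you show directly that every $F\in D^2(\fr{g})$ is inner.
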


\begin{remark}\label{SuShu}Theorem \ref{SuZhu} follows immediately as a corollary of Theorem \ref{derived}.  Indeed, if $\fr{g}$ is a perfect Lie algebra with trivial center then by Theorem \ref{mainperfect2}, the subideal sequence $\fr{g}\unlhd D(\fr{g})\unlhd D^2(\fr{g})$ yields $\fr{g}\unlhd D^2(\fr{g})$, and thus $D(\fr{g})$ is complete by Theorem \ref{derived}.\end{remark}

 To prove Theorem \ref{derived}, we need the following lemma which can be deduced by intermediate arguments in \cite{SuZhu}.  We provide a proof here for completeness and to establish some notation.

\begin{lemma}\label{lem} Let $\big(\fr{g}, [ \ , \ ]_{\fr{g}}\big)$ be a Lie algebra with trivial center, let $\big(D(\fr{g}), [ \ , \ ]_D\big)$ be the Lie algebra of derivations of $\fr{g}$ and let $D^2(\fr{g})$ be the Lie algebra of derivations of $D(\fr{g})$.  Denote by $\op{ad}^{\fr{g}}:\fr{g}\rightarrow D(\fr{g})$, $X\mapsto\op{ad}^{\fr{g}}_X$, the adjoint representation of $\fr{g}$.  Let $f\in D(\fr{g})$ and $F\in D^2(\fr{g})$.\\

\noindent \emph{(i)} For all $X\in \fr{g}$, it follows that $[f,\op{ad}^{\fr{g}}_X]_D=\op{ad}^{\fr{g}}_{f(X)}$.\\

 \noindent \emph{(ii)} If $F(\op{ad}^{\fr{g}}_X)=0$ for all $\op{ad}^{\fr{g}}_X\in D(\fr{g})$, then $F=0$.    \end{lemma}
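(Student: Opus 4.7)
The plan is to prove part (i) by a direct unwinding of the derivation property, and then bootstrap (i) twice to prove (ii). The proof is essentially self-contained once one notes that (i) encodes the fact that the inner derivations $\op{ad}^{\fr{g}}(\fr{g})$ form an ideal of $D(\fr{g})$ in a very explicit way (the bracket of an outer and an inner derivation is determined by the action of the outer derivation on $\fr{g}$).

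For part (i), I would evaluate both sides on an arbitrary $Y \in \fr{g}$. Expanding $[f, \op{ad}^{\fr{g}}_X]_D(Y) = f([X,Y]_{\fr{g}}) - [X, f(Y)]_{\fr{g}}$ and using that $f$ is a derivation to rewrite $f([X,Y]_{\fr{g}}) = [f(X), Y]_{\fr{g}} + [X, f(Y)]_{\fr{g}}$, the second term cancels and we obtain $[f(X), Y]_{\fr{g}} = \op{ad}^{\fr{g}}_{f(X)}(Y)$. This is a one-line computation.

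For part (ii), the idea is to test $F$ on an arbitrary $f \in D(\fr{g})$ by bracketing $f$ against an inner derivation. Fix $X \in \fr{g}$. Using that $F$ is a derivation of $D(\fr{g})$ together with the assumption $F(\op{ad}^{\fr{g}}_X) = 0$, I would write
\begin{equation*}
F\bigl([f, \op{ad}^{\fr{g}}_X]_D\bigr) = [F(f), \op{ad}^{\fr{g}}_X]_D + [f, F(\op{ad}^{\fr{g}}_X)]_D = [F(f), \op{ad}^{\fr{g}}_X]_D.
\end{equation*}
On the other hand, by part (i) the left-hand side equals $F(\op{ad}^{\fr{g}}_{f(X)})$, which vanishes by assumption. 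Hence $[F(f), \op{ad}^{\fr{g}}_X]_D = 0$. Applying (i) once more, this time to the derivation $F(f) \in D(\fr{g})$, yields $\op{ad}^{\fr{g}}_{F(f)(X)} = 0$. Since $\fr{g}$ has trivial center, $\op{ad}^{\fr{g}}$ is injective, so $F(f)(X) = 0$. As $X$ and $f$ were arbitrary, $F = 0$.

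The only possible pitfall is keeping track of which bracket lives in which algebra, so I would be careful to distinguish $[\ ,\ ]_{\fr{g}}$ from $[\ ,\ ]_D$ throughout and to note explicitly that triviality of the center of $\fr{g}$ is what makes $\op{ad}^{\fr{g}}$ faithful. No other difficulties are expected; both parts reduce to unwinding the derivation identity and using part (i) as a rewriting rule.
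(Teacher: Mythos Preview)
Your proof is correct and essentially identical to the paper's own argument: part (i) is the same one-line derivation computation, and for part (ii) both you and the paper apply (i) twice together with the derivation property of $F$ and the faithfulness of $\op{ad}^{\fr{g}}$, differing only in the order in which the terms are written down.
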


\begin{proof} (see also \cite{SuZhu}) For (i), let $Y\in \fr{g}$.  Given that $f$ is a derivation of $\fr{g}$, we have

\[ [f,\op{ad}^{\fr{g}}_X]_{D}Y=f([X,Y]_{\fr{g}})-[X,f(Y)]_{\fr{g}}=[f(X),Y]_{\fr{g}}=\op{ad}^{\fr{g}}_{f(X)}Y. \]

\noindent Since $Y$ is arbitrary, the above equation yields part (i). 

For part (ii), let $g\in D(\fr{g})$ be arbitrary. It suffices to prove that $F(g)=0$. For arbitrary $Y\in \fr{g}$, given that $F$ is a derivation of $D(\fr{g})$, using part (i) and the hypothesis that $F(\op{ad}^{\fr{g}}_X)=0$ for all $\op{ad}^{\fr{g}}_X\in D(\fr{g})$, we have

\begin{eqnarray*}\op{ad}^{\fr{g}}_{F(g)(Y)}&=&[F(g),\op{ad}^{\fr{g}}_Y]_{D}=F\big( [g,\op{ad}^{\fr{g}}_Y]_{D}\big)-[g,F(\op{ad}^{\fr{g}}_Y)]_{D}\\
&=&F\big( [g,\op{ad}^{\fr{g}}_Y]_{D}\big)=F(\op{ad}^{\fr{g}}_{g(Y)})=0.\end{eqnarray*}

\noindent Since $\fr{g}$ has trivial center, $\op{ad}^{\fr{g}}$ is faithful and thus $F(g)(Y)=0$ for all $Y\in \fr{g}$, i.e. $F(g)=0$.\end{proof}

\emph{Proof of Theorem \ref{derived}}.  Let $\op{ad}^{\fr{g}}:\fr{g}\rightarrow D(\fr{g})$ and $\op{ad}^{D}:D(\fr{g})\rightarrow D^2(\fr{g})$ denote the adjoint representations of $\fr{g}$ and $D(\fr{g})$ respectively. If $D(\fr{g})$ is complete then $D^2(\fr{g})$ is isomorphic to $\op{ad}^D_{D(\fr{g})}$, which is isomorphic to $D(\fr{g})$.  Since $\fr{g}$ is an ideal of $D(\fr{g})$, it is also an ideal of $D^2(\fr{g})$.  Conversely, assume that $\fr{g}$ is an ideal of $D^2(\fr{g})$ and consider the subideal sequence $\fr{g}\unlhd D(\fr{g})\unlhd D^2(\fr{g})$.  We will prove that $D^2(\fr{g})$ is isomorphic to $D(\fr{g})$.  Denote by $[ \ , \ ]_{D^2}$ the Lie bracket in $D^2(\fr{g})$. Identifying the algebras $\fr{g}$ and $D(\fr{g})$ with their corresponding embedded images in $D^2(\fr{g})$, we may write 

\begin{equation}\label{dd} D(\fr{g})=\op{ad}^D_{D(\fr{g})} \ \makebox{and} \ \fr{g}=\op{ad}^{D}_{\op{ad}^{\fr{g}}_{\fr{g}}}. \end{equation}

\noindent Since the adjoint representations $\op{ad}:D^i(\fr{g})\rightarrow D^{i+1}(\fr{g})$ are faithfull (given that $\fr{g}$ and all its derivation algebras have trivial center), any element $F$ of $D^2(\fr{g})$ defines a unique derivation of $D(\fr{g})$ by

\begin{equation*}\label{ddo}F\cdot \op{ad}^{D}_{f}:=[F,\op{ad}^{D}_{f}]_{D^2}=\op{ad}^{D}_{F(f)},\end{equation*}

\noindent for all  $\op{ad}^{D}_{f}\in D(\fr{g})$, as follows from Lemma \ref{lem}. Restricting the above map to $D(\fr{g})=\op{ad}^D_{D(\fr{g})}\subseteq D^2(\fr{g})$, any element $F=\op{ad}^D_f$ of $D(\fr{g})$ defines a unique derivation of $\fr{g}=\op{ad}^{D}_{\op{ad}^{\fr{g}}_{\fr{g}}}$ by 

\begin{equation*}\label{ddo}F\cdot \op{ad}^D_{\op{ad}^{\fr{g}}_X}=[\op{ad}^D_f,\op{ad}^D_{\op{ad}^{\fr{g}}_X}]_{D^2}=\op{ad}^{D}_{[f,\op{ad}^{\fr{g}}_X]_D}=\op{ad}^D_{\op{ad}^{\fr{g}}_{f(X)}},\end{equation*}

\noindent for all $\op{ad}^D_{\op{ad}^{\fr{g}}_X}\in \fr{g}$.  Since $\fr{g}\unlhd D^2(\fr{g})$, it follows that $[D^2(\fr{g}),\fr{g}]_{D^2}\subseteq \fr{g}$, and thus we have a Lie algebra homomorphism $\phi: D^2(\fr{g})\rightarrow D(\fr{g})$ with 

\begin{equation*}\label{dam} \phi(F)(\tilde{X}):=F\cdot \tilde{X}=[F,\tilde{X}]_{D^2},\end{equation*}

\noindent for $F\in D^2(\fr{g})$ and $\tilde{X}\in \fr{g}$. We will prove that $\phi$ is an isomorphism, which will conclude the proof of the proposition. The homomorphism $\phi$ is surjective because for all $F\in D(\fr{g})$ and $\tilde{X}\in \fr{g}$ we have $\phi(F)(\tilde{X})=F\cdot \tilde{X}$, and hence $\phi(F)=F$ as unique derivations of $\fr{g}$.  To prove that $\phi$ is injective, let $F\in \op{Ker}(\phi)$.  Then by virtue of part (i) of Lemma \ref{lem}, and in view of the identification \eqref{dd}, for all $\tilde{X}=\op{ad}^{D}_{\op{ad}^{\fr{g}}_{X}}\in \fr{g}$ we have

\[ 0=\phi(F)(\tilde{X})=[F,\tilde{X}]_{D^2}=[F,\op{ad}^{D}_{\op{ad}^{\fr{g}}_{X}}]_{D^2}=\op{ad}^{D}_{F(\op{ad}^{\fr{g}}_{X})}.\]

\noindent Since $\op{ad}^D$ is faithful, the above equation implies that $F(\op{ad}^{\fr{g}}_{X})=0$ for all $\op{ad}^{\fr{g}}_X\in D(\fr{g})$, and thus part (ii) of Lemma \ref{lem} yields $F=0$.  This concludes that $\phi$ is injective, and hence $D^2(\fr{g})$ is isomorphic to $D(\fr{g})$.  \qed
\\

In the sequel, we address the question of transitivity for complete subideals. Since there exist complete but non-perfect Lie algebras (e.g. any complete solvable Lie algebra \cite{MenZ}), Theorem \ref{mainperfect2} implies that not all complete Lie algebras satisfy the transitivity property (ii) of its statement. However, we have the following weaker property.

\begin{prop}\label{complete}Let $\fr{h}$ be a complete Lie algebra over a field $\mathbb K$.  Assume that there exist Lie algebras $\fr{k}$ and $\fr{g}$ such that $\fr{h}\unlhd\fr{k}\unlhd \fr{g}$.  If $\fr{k}$ has trivial center, then $\fr{h}$ is an ideal of $\fr{g}$.\end{prop}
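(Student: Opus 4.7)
The plan is to prove the stronger statement that $\fr{h}$ is a \emph{characteristic} ideal of $\fr{k}$. Once this is established, the proposition follows at once: for any $Y\in\fr{g}$, the operator $\op{ad}^{\fr{g}}_Y$ restricts to a derivation of $\fr{k}$ by the hypothesis $\fr{k}\unlhd\fr{g}$, which must then preserve $\fr{h}$, giving $[Y,\fr{h}]\subseteq\fr{h}$.

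The first step is to decompose $\fr{k}$ as a direct sum of ideals, exploiting the completeness of $\fr{h}$. Since $\fr{h}\unlhd\fr{k}$, the map $\phi:\fr{k}\to D(\fr{h})$, $Z\mapsto \op{ad}^{\fr{k}}_Z|_{\fr{h}}$, is a Lie algebra homomorphism with kernel the centralizer $C_{\fr{k}}(\fr{h})$. Because $\fr{h}$ has trivial center, $\phi|_{\fr{h}}$ is injective; because $\fr{h}$ is complete, $\phi(\fr{h})=\op{ad}^{\fr{h}}(\fr{h})=D(\fr{h})$. Together these give
\begin{equation*}
\fr{k}=\fr{h}\oplus C_{\fr{k}}(\fr{h}), \qquad [\fr{h},C_{\fr{k}}(\fr{h})]=0,
\end{equation*}
as a direct sum of ideals (with $C_{\fr{k}}(\fr{h})$ an ideal by a short Jacobi check). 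Combined with the trivial-center hypothesis on $\fr{k}$, this forces the center of $C_{\fr{k}}(\fr{h})$ to vanish as well.

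Now let $D\in D(\fr{k})$ be arbitrary and let $p:\fr{k}\to\fr{h}$ denote the projection along $C_{\fr{k}}(\fr{h})$. A brief computation using $[\fr{h},C_{\fr{k}}(\fr{h})]=0$ shows that $D':=p\circ D|_{\fr{h}}$ is a derivation of $\fr{h}$; by completeness, $D'=\op{ad}^{\fr{h}}_{X_0}$ for some $X_0\in\fr{h}$. The corrected derivation $\widetilde{D}:=D-\op{ad}^{\fr{k}}_{X_0}\in D(\fr{k})$ then satisfies $\widetilde{D}(\fr{h})\subseteq C_{\fr{k}}(\fr{h})$. To finish, for $X\in\fr{h}$ and $C\in C_{\fr{k}}(\fr{h})$ I apply the Leibniz rule to $[X,C]=0$: the resulting identity $[\widetilde{D}(X),C]+[X,\widetilde{D}(C)]=0$ splits into a summand in $C_{\fr{k}}(\fr{h})$ and a summand in $\fr{h}$, so directness of the decomposition forces each to vanish. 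Hence $\widetilde{D}(X)$ commutes with all of $C_{\fr{k}}(\fr{h})$, and being an element of $C_{\fr{k}}(\fr{h})$ it lies in its (trivial) center. Therefore $D|_{\fr{h}}=\op{ad}^{\fr{k}}_{X_0}|_{\fr{h}}$, which certainly preserves $\fr{h}$.

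The main obstacle I anticipate is precisely this last rigidity step. The Leibniz rule alone yields only $\widetilde{D}|_{[\fr{h},\fr{h}]}=0$, which would suffice if $\fr{h}$ were perfect but is inadequate in the merely complete case (think of a solvable complete $\fr{h}$). The decisive input is the interaction between the decomposition $\fr{k}=\fr{h}\oplus C_{\fr{k}}(\fr{h})$ and the trivial-center hypothesis on $\fr{k}$: the former promotes commutation of $\widetilde{D}(X)$ with $\fr{h}$ to commutation with all of $\fr{k}$, and the latter then kills $\widetilde{D}(X)$. This is also precisely the place where the assumption that $\fr{k}$ has trivial center enters essentially.
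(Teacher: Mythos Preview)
Your proof is correct and follows essentially the same route as the paper: both obtain the direct-sum decomposition $\fr{k}=\fr{h}\oplus C_{\fr{k}}(\fr{h})$ from completeness, deduce that $C_{\fr{k}}(\fr{h})$ has trivial center, and then show that the $C_{\fr{k}}(\fr{h})$-component of the relevant bracket is central in $C_{\fr{k}}(\fr{h})$, hence zero. The only organizational difference is that the paper works directly with $\op{ad}^{\fr{g}}_X|_{\fr{k}}$ and the projection $\pi_{\fr{c}}$ (using the Jacobi identity once), whereas you abstract to an arbitrary derivation $D\in D(\fr{k})$ and invoke completeness a second time to subtract an inner piece before running the analogous Leibniz computation; this buys you the slightly stronger conclusion that $\fr{h}$ is characteristic in $\fr{k}$.
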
  

\begin{proof} Let $\op{ad}^{\fr{h}}:\fr{h}\rightarrow D(\fr{h})$ and $\op{ad}^{\fr{k}}:\fr{k}\rightarrow D(\fr{k})$ denote the adjoint representations of $\fr{h}$ and $\fr{k}$ respectively.  Given that $\fr{h}$ is an ideal of $\fr{k}$, we can consider the homomorphism $\phi:\fr{k}\rightarrow D(\fr{h})$, with $\phi(X):=\left.\operatorname{ad}^{\fr{k}}_X\right|_{\fr{h}}:\fr{h}\rightarrow \fr{h}$, $X\in \fr{k}$. Since $\fr{h}$ is complete, we have the isomorphism $\fr{h}=D(\fr{h})=\op{ad}^{\fr{h}}_{\fr{h}}=\left.\op{ad}^{\fr{k}}_{\fr{h}}\right|_{\fr{h}}$, and thus the homomorphism $\phi$ is surjective.  Moreover, the kernel of $\phi$ is the centralizer of $\fr{h}$ in $\fr{k}$, i.e. the ideal of $\fr{k}$ given by

\[ \fr{c}_{\fr{k}}(\fr{h})=\{X\in \fr{k}:\operatorname{ad}^{\fr{k}}_XY=0 \ \makebox{for all} \ Y\in \fr{h}\}.\]

\noindent Therefore, we have a canonical isomorphism $\psi:\fr{k}/\fr{c}_{\fr{k}}(\fr{h})\rightarrow D(\fr{h})=\fr{h}$ and a short exact sequence 

\[ 0 \xrightarrow[\text{}]{}  \fr{c}_{\fr{k}}(\fr{h})\xrightarrow[\text{}]{\text{$i$}} \fr{k}\xrightarrow[\text{}]{\text{$\hat{\pi}$}}\fr{h} \xrightarrow[]{} 0,   \]

\noindent where $i$ is the inclusion map and $\hat{\pi}=\psi\circ \pi$, with $\pi:\fr{k}\rightarrow \fr{k}/\fr{c}_{\fr{k}}(\fr{h})$ being the canonical projection.  This yields the vector space direct sum

\begin{equation}\label{dirsum}\fr{k}=\fr{h}\oplus \fr{c}_{\fr{k}}(\fr{h}),\end{equation}

\noindent which is also a direct sum of Lie algebras, given that $[\fr{h},\fr{c}_{\fr{k}}(\fr{h})]=\{0\}$, as follows from the definition of $\fr{c}_{\fr{k}}(\fr{h})$.  Therefore, the orthogonal projection $\pi_{\fr{c}}:\fr{k}\rightarrow \fr{c}_{\fr{k}}(\fr{h})$ with respect to the decomposition \eqref{dirsum} is a homomorphism of Lie algebras. 

To prove that $\fr{h}$ is an ideal of $\fr{g}$, let $X\in \fr{g}$, $Y\in \fr{h}$ and $Z\in \fr{c}_{\fr{k}}(\fr{h})$.  Since $\fr{k}\unlhd \fr{g}$, we have 

\begin{equation}\label{inclus}[X,Y]\in \fr{k}.\end{equation}

\noindent On the other hand, by taking into account the facts that $[Y,Z]=0$, $\fr{k}\unlhd \fr{g}$ and $\fr{h}\unlhd \fr{k}$, we obtain

\begin{eqnarray*}\big[\pi_{\fr{c}}([X,Y]),Z\big]&=&\big[\pi_{\fr{c}}([X,Y]),\pi_{\fr{c}}(Z)\big]=\pi_{\fr{c}}\big(\big[[X,Y],Z\big]\big)\\
&=&-\pi_{\fr{c}}\big(\big[[Y,Z],X\big]\big)-\pi_{\fr{c}}\big(\big[[Z,X],Y\big]\big)\\
&=&
-\pi_{\fr{c}}\big(\big[[Z,X],Y\big]\big)\in \pi_{\fr{c}}([\fr{k},\fr{h}])\subseteq \pi_{\fr{c}}(\fr{h})=\{0\}.
 \end{eqnarray*}

\noindent Therefore, $\pi_{\fr{c}}([X,Y])$ lies in the center of $\fr{c}_{\fr{k}}(\fr{h})$.  But since $\fr{k}$ has trivial center, the direct sum \eqref{dirsum} implies that $\fr{c}_{\fr{k}}(\fr{h})$ also has trivial center.  We conclude that $\pi_{\fr{c}}([X,Y])=0$ which, in view of relation \eqref{inclus} and the direct sum \eqref{dirsum}, implies that $[X,Y]$ in $\fr{h}$ for all $X\in \fr{g}$ and $Y\in \fr{h}$. \end{proof}

\section{Some extrinsic conditions for subideals}\label{SecCom}

In this section, we consider conditions that rely on the embedding of a Lie algebra $\fr{h}$ in a Lie algebra $\fr{g}$, such that if $\fr{h}$ is a subideal of $\fr{g}$ then $\fr{h}$ is an ideal of $\fr{g}$.  All Lie algebras in this section are finite-dimensional over a field of characteristic zero. Firstly, we recall the following.

\begin{theorem}\label{radicalth}\emph{(\cite{Sh}, Th. 6)} Let $\fr{g}$ be a finite-dimensional Lie algebra over a field of characteristic zero and let $\fr{h}$ be a subideal of $\fr{g}$.  Denote by $\fr{r}_{\fr{g}}$, $\fr{r}_{\fr{h}}$ the radicals of $\fr{g}$, $\fr{h}$ respectively.  Then $\fr{r}_{\fr{h}}=\fr{r}_{\fr{g}}\cap \fr{h}$.\end{theorem}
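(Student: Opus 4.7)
The plan is to prove the statement by induction on the length $n$ of a subideal chain $\fr{h}=\fr{l}_0\unlhd \fr{l}_1\unlhd\cdots\unlhd \fr{l}_n=\fr{g}$. The substantive case is $n=1$, where $\fr{h}$ is already an ideal of $\fr{g}$; once this is settled, the general result falls out by splicing two applications of the ideal case along the chain.

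For the ideal case $n=1$, I would verify the two inclusions separately. The inclusion $\fr{r}_{\fr{g}}\cap\fr{h}\subseteq \fr{r}_{\fr{h}}$ is essentially formal: $\fr{r}_{\fr{g}}\cap\fr{h}$ is solvable as a subalgebra of $\fr{r}_{\fr{g}}$, and it is an ideal of $\fr{h}$ since
\[
[\fr{h},\fr{r}_{\fr{g}}\cap\fr{h}]\subseteq [\fr{g},\fr{r}_{\fr{g}}]\cap\fr{h}\subseteq \fr{r}_{\fr{g}}\cap\fr{h},
\]
so by maximality $\fr{r}_{\fr{g}}\cap\fr{h}\subseteq \fr{r}_{\fr{h}}$. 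The reverse inclusion $\fr{r}_{\fr{h}}\subseteq \fr{r}_{\fr{g}}\cap\fr{h}$ is the substantive one: the plan is to show $\fr{r}_{\fr{h}}$ is in fact an ideal of the whole of $\fr{g}$. Granted the standard fact that in characteristic zero the radical of a finite-dimensional Lie algebra is a \emph{characteristic} ideal, this is immediate: for any $X\in\fr{g}$ the map $\op{ad}_X|_{\fr{h}}$ is a well-defined derivation of $\fr{h}$ (because $\fr{h}\unlhd\fr{g}$), hence preserves $\fr{r}_{\fr{h}}$. Since $\fr{r}_{\fr{h}}$ is then a solvable ideal of $\fr{g}$, maximality of $\fr{r}_{\fr{g}}$ forces $\fr{r}_{\fr{h}}\subseteq \fr{r}_{\fr{g}}$, and clearly $\fr{r}_{\fr{h}}\subseteq \fr{h}$.

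The inductive step is a routine splice. Assuming the theorem for chains of length $<n$, apply the inductive hypothesis to the shortened chain $\fr{l}_1\unlhd\cdots\unlhd\fr{l}_n=\fr{g}$ to obtain $\fr{r}_{\fr{l}_1}=\fr{r}_{\fr{g}}\cap\fr{l}_1$, and then the base case to $\fr{h}\unlhd\fr{l}_1$ to get $\fr{r}_{\fr{h}}=\fr{r}_{\fr{l}_1}\cap\fr{h}$. Combining, and using $\fr{h}\subseteq\fr{l}_1$,
\[
\fr{r}_{\fr{h}}=\fr{r}_{\fr{l}_1}\cap\fr{h}=(\fr{r}_{\fr{g}}\cap\fr{l}_1)\cap\fr{h}=\fr{r}_{\fr{g}}\cap\fr{h}.
\]

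The real obstacle is the auxiliary characteristic-ideal fact, which is precisely where the characteristic-zero hypothesis enters. I would prove it by adjoining a derivation $D$ of $\fr{g}$ as follows: form the semidirect product $\fr{g}\rtimes\mathbb{K}D$, in which $\fr{g}$ sits as an ideal, and consider the finite-dimensional (hence eventually stable) sum $\fr{m}:=\fr{r}_{\fr{g}}+D(\fr{r}_{\fr{g}})+D^{2}(\fr{r}_{\fr{g}})+\cdots$. A direct induction using the derivation identity shows $\fr{m}$ is an ideal of $\fr{g}$, while an application of Cartan's solvability criterion to its derived series (this is the step that genuinely requires characteristic zero) shows that it is solvable. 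Maximality of $\fr{r}_{\fr{g}}$ then forces $\fr{m}\subseteq\fr{r}_{\fr{g}}$, so in particular $D(\fr{r}_{\fr{g}})\subseteq\fr{r}_{\fr{g}}$, completing the argument.
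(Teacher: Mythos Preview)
The paper does not prove this statement; it is quoted from Schenkman (\cite{Sh}, Theorem~6) and used as a black box in the subsequent arguments of Section~\ref{SecCom}. There is therefore no ``paper's own proof'' to compare against.

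Your argument is correct. The induction along the subideal chain is clean, and the reduction of the base case to the fact that the radical is a characteristic ideal is exactly the right leverage point. One small remark on that auxiliary fact: your detour through $\fr{m}=\sum_{k\geq 0}D^{k}(\fr{r}_{\fr{g}})$ works, but it is longer than necessary. In characteristic zero one has the Killing-form characterization $\fr{r}_{\fr{g}}=[\fr{g},\fr{g}]^{\perp_{\kappa}}$; since every derivation $D$ of $\fr{g}$ is skew with respect to $\kappa$ and preserves $[\fr{g},\fr{g}]$, it follows at once that $D(\fr{r}_{\fr{g}})\subseteq\fr{r}_{\fr{g}}$. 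This is the same Cartan-criterion input you invoke to show $\fr{m}$ is solvable, so you may as well apply it directly and skip the construction of $\fr{m}$.
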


As a consequence, we obtain the following criterion for subideals in characteristic zero.

\begin{prop}\label{Leviconsequence}Let $\fr{g}$ be a finite-dimensional Lie algebra over a field of characteristic zero and let $\fr{h}$ be a subideal of $\fr{g}$ with radical $\fr{r}_{\fr{h}}$.  The following are equivalent:\\

\noindent \emph{(i)} The Lie algebra $\fr{h}$ is an ideal of $\fr{g}$.\\
\noindent \emph{(ii)} The radical $\fr{r}_{\fr{h}}$ of $\fr{h}$ is an ideal of $\fr{g}$.\\
\noindent \emph{(iii)} The radical $\fr{r}_{\fr{h}}$ of $\fr{h}$ satisfies the relation $[\fr{r}_{\fr{h}}, \fr{g}]\subseteq \fr{h}$.\end{prop}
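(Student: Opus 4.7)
The plan is to establish the cycle (i) $\Rightarrow$ (ii) $\Rightarrow$ (iii) $\Rightarrow$ (i). The first two implications are immediate consequences of earlier material, while the third is the substantive one and will be handled by combining Levi's theorem with the transitivity property of perfect Lie algebras proved in Theorem \ref{mainperfect2}.

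For (i) $\Rightarrow$ (ii), I would invoke Theorem \ref{radicalth} to write $\fr{r}_{\fr{h}} = \fr{r}_{\fr{g}} \cap \fr{h}$; since $\fr{r}_{\fr{g}}$ is an ideal of $\fr{g}$ by definition and $\fr{h}$ is an ideal of $\fr{g}$ by hypothesis, their intersection $\fr{r}_{\fr{h}}$ is an ideal of $\fr{g}$. The implication (ii) $\Rightarrow$ (iii) is immediate from $[\fr{r}_{\fr{h}}, \fr{g}] \subseteq \fr{r}_{\fr{h}} \subseteq \fr{h}$.

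For (iii) $\Rightarrow$ (i), my approach is to bring the perfect-Lie-algebra transitivity of Theorem \ref{mainperfect2} into play by working with the stable derived subalgebra $\fr{h}^{\infty} := \bigcap_{k\ge 0} \fr{h}^{(k)}$. This is an intersection of ideals of $\fr{h}$ and hence an ideal of $\fr{h}$; since $\fr{h}$ is finite-dimensional, the descending sequence stabilizes at some $\fr{h}^{(N)} = \fr{h}^{(N+1)}$, whence $\fr{h}^{\infty}$ is perfect. Inserting it into the subideal chain for $\fr{h}$ yields $\fr{h}^{\infty} \unlhd \fr{h} \unlhd \fr{l}_1 \unlhd \cdots \unlhd \fr{g}$, so $\fr{h}^{\infty}$ is a perfect subideal of $\fr{g}$, which Theorem \ref{mainperfect2} upgrades to an ideal of $\fr{g}$. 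I would then apply Levi decomposition in characteristic zero to write $\fr{h} = \fr{s} + \fr{r}_{\fr{h}}$ with $\fr{s}$ semisimple; since $\fr{s}$ is perfect, $\fr{s} = [\fr{s}, \fr{s}] \subseteq \fr{h}^{(1)}$, and inductively $\fr{s} \subseteq \fr{h}^{(k)}$ for every $k$, so $\fr{s} \subseteq \fr{h}^{\infty}$ and hence $\fr{h} = \fr{h}^{\infty} + \fr{r}_{\fr{h}}$. Combining these facts with the hypothesis (iii) gives
$$[\fr{g}, \fr{h}] = [\fr{g}, \fr{h}^{\infty}] + [\fr{g}, \fr{r}_{\fr{h}}] \subseteq \fr{h}^{\infty} + \fr{h} = \fr{h},$$
so $\fr{h}$ is an ideal of $\fr{g}$.

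The main obstacle is producing a \emph{perfect} subideal of $\fr{g}$ that together with $\fr{r}_{\fr{h}}$ reconstitutes $\fr{h}$. The naive candidate, the Levi complement $\fr{s}$ itself, is not in general an ideal of $\fr{h}$ and therefore not obviously a subideal of $\fr{g}$; replacing it by $\fr{h}^{\infty}$ cures this, because $\fr{h}^{\infty}$ is canonical and hence is an ideal of $\fr{h}$, still contains $\fr{s}$ (so $\fr{h}^{\infty} + \fr{r}_{\fr{h}} = \fr{h}$), and is automatically perfect, making Theorem \ref{mainperfect2} directly applicable. Once this reduction is in place, only a one-line bracket computation remains.
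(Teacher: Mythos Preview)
Your proof is correct. The implications (i)$\Rightarrow$(ii)$\Rightarrow$(iii) match the paper verbatim, and your argument for (iii)$\Rightarrow$(i) is valid: $\fr{h}^{\infty}$ is indeed a perfect ideal of $\fr{h}$, hence a perfect subideal of $\fr{g}$, hence an ideal of $\fr{g}$ by Theorem~\ref{mainperfect2}; and $\fr{s}\subseteq\fr{h}^{\infty}$ gives $\fr{h}=\fr{h}^{\infty}+\fr{r}_{\fr{h}}$, after which the bracket computation finishes it.

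The paper takes a closely related but technically different route for (iii)$\Rightarrow$(i). Instead of introducing $\fr{h}^{\infty}$ and invoking Theorem~\ref{mainperfect2} as a black box, it works directly with the Levi factor $\fr{h}_s$ and climbs the subideal chain $\fr{h}=\fr{l}_0\unlhd\fr{l}_1\unlhd\cdots\unlhd\fr{l}_n=\fr{g}$ step by step: from $\fr{h}\unlhd\fr{l}_1\unlhd\fr{l}_2$ and $\fr{h}_s=[\fr{h}_s,\fr{h}_s]$, the Jacobi identity yields $[\fr{h}_s,\fr{l}_2]\subseteq[\fr{h},[\fr{h},\fr{l}_2]]\subseteq[\fr{h},\fr{l}_1]\subseteq\fr{h}$, which together with $[\fr{r}_{\fr{h}},\fr{g}]\subseteq\fr{h}$ gives $\fr{h}\unlhd\fr{l}_2$, and then one inducts. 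This is essentially the proof of Theorem~\ref{mainperfect2} re-run inline, applied to the perfect subalgebra $\fr{h}_s$ sitting inside $\fr{h}$ rather than to a perfect ideal. The paper's version sidesteps the issue you identified (that $\fr{h}_s$ need not be an ideal of $\fr{h}$) because the Jacobi calculation only needs $\fr{h}_s\subseteq\fr{h}$ and $\fr{h}_s$ perfect, not that $\fr{h}_s$ itself be a subideal. Your introduction of $\fr{h}^{\infty}$ is a clean alternative that lets you quote Theorem~\ref{mainperfect2} directly instead of reproducing its mechanism.
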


\begin{proof} By Theorem \ref{radicalth}, we have $\fr{r}_{\fr{h}}=\fr{r}_{\fr{g}}\cap \fr{h}$. If $\fr{h}$ is an ideal of $\fr{g}$ then $\fr{r}_{\fr{h}}$ is the intersection of ideals of $\fr{g}$, and hence it is an ideal of $\fr{g}$. Therefore, (i) implies (ii). The fact that (ii) implies (iii) is trivial. Finally, we will prove that (iii) implies (i). Since $\fr{h}$ is a subideal of $\fr{g}$, there exist Lie subalgebras $\fr{l}_0,\fr{l}_1,\fr{l}_2\dots,\fr{l}_n$ of $\fr{g}$ such that 

\begin{equation}\label{ttempp}\fr{h}=\fr{l}_0\unlhd \fr{l}_1\unlhd \fr{l}_2 \unlhd\cdots \unlhd \fr{l}_n=\fr{g}.\end{equation}

\noindent  Using the Levi decomposition, we can write $\fr{h}$ as the semidirect sum 

\begin{equation}\label{semiLevi}\fr{h}=\fr{h}_s\rtimes \fr{r}_{\fr{h}},\end{equation}

\noindent where $\fr{h}_s$ is a semisimple Lie algebra, and thus $\fr{h}_s=[\fr{h}_s,\fr{h}_s]$.  Along with the fact that $\fr{h}\unlhd \fr{l}_1 \unlhd \fr{l}_2$, the Jacobi identity yields

\begin{equation*} [\fr{h}_s, \fr{l}_2]=[[\fr{h}_s,\fr{h}_s],\fr{l}_2]\subseteq [\fr{h}_s,[\fr{h}_s, \fr{l}_2]]\subseteq [\fr{h},[\fr{h}, \fr{l}_2]]\subseteq [\fr{h},\fr{l}_1]\subseteq \fr{h}. \end{equation*} 

\noindent Combined with the fact that $[\fr{r}_{\fr{h}}, \fr{g}]\subseteq \fr{h}$ and in view of the Levi decomposition \eqref{semiLevi}, the above equation implies that $[\fr{h}, \fr{l}_2]\subseteq \fr{h}$ and thus $\fr{h}\unlhd \fr{l}_2$. Proceeding similarly by induction in relation \eqref{ttempp}, we conclude that $\fr{h}\unlhd \fr{g}$. \end{proof} 

\begin{corol}\label{newell} Let $\fr{g}$ be a finite-dimensional Lie algebra over a field of characteristic zero and let $\fr{h}$ be a Lie subalgebra of $\fr{g}$.  Assume that one of the following is true:\\

\noindent \emph{(i)} The radical $\fr{r}_{\fr{g}}$ of $\fr{g}$ is contained in $\fr{h}$.\\
\noindent \emph{(ii)} The Lie algebra $\fr{g}$ is reductive, i.e. $\fr{g}=\fr{g}_s\oplus \fr{z}(\fr{g})$ where $\fr{g}_s$ is a semisimple Lie algebra and $\fr{z}(\fr{g})$ is the center of $\fr{g}$.\\

\noindent Then $\fr{h}$ is a subideal of $\fr{g}$ if and only if $\fr{h}$ is an ideal of $\fr{g}$.\end{corol}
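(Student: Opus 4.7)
The plan is to reduce everything to the criterion established in Proposition \ref{Leviconsequence}, namely the equivalence of (i) and (iii) there. The nontrivial direction is ``subideal implies ideal,'' so I would assume $\fr{h}$ is a subideal of $\fr{g}$ and try to verify the condition $[\fr{r}_{\fr{h}},\fr{g}]\subseteq \fr{h}$ under each of the two hypotheses. The identification of $\fr{r}_{\fr{h}}$ inside $\fr{g}$ via Theorem \ref{radicalth}, which gives $\fr{r}_{\fr{h}}=\fr{r}_{\fr{g}}\cap \fr{h}$, is the key structural fact I would use in both cases.

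For hypothesis (i), the containment $\fr{r}_{\fr{g}}\subseteq \fr{h}$ combined with Theorem \ref{radicalth} immediately gives $\fr{r}_{\fr{h}}=\fr{r}_{\fr{g}}$. Since $\fr{r}_{\fr{g}}$ is an ideal of $\fr{g}$, one gets $[\fr{r}_{\fr{h}},\fr{g}]=[\fr{r}_{\fr{g}},\fr{g}]\subseteq \fr{r}_{\fr{g}}\subseteq \fr{h}$, so Proposition \ref{Leviconsequence} applies.

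For hypothesis (ii), the standard fact I would invoke is that for a reductive Lie algebra $\fr{g}=\fr{g}_s\oplus \fr{z}(\fr{g})$ in characteristic zero, the radical of $\fr{g}$ coincides with its center $\fr{z}(\fr{g})$ (the semisimple summand has zero radical, and the center is solvable and maximal solvable in the reductive decomposition). Hence by Theorem \ref{radicalth}, $\fr{r}_{\fr{h}}=\fr{z}(\fr{g})\cap \fr{h}\subseteq \fr{z}(\fr{g})$, which forces $[\fr{r}_{\fr{h}},\fr{g}]=\{0\}\subseteq \fr{h}$, and again Proposition \ref{Leviconsequence} concludes the argument.

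There is no real obstacle here, since the technical content has already been absorbed by Theorem \ref{radicalth} and Proposition \ref{Leviconsequence}; the only small point to keep explicit is the identification $\fr{r}_{\fr{g}}=\fr{z}(\fr{g})$ in the reductive case, so that the hypothesis of characteristic zero (needed for Theorem \ref{radicalth} and Levi's decomposition) is used transparently. The converse direction, that an ideal is trivially a subideal, requires no comment.
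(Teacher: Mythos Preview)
Your proof is correct and follows essentially the same route as the paper: both cases are reduced to Proposition \ref{Leviconsequence} via the identification $\fr{r}_{\fr{h}}=\fr{r}_{\fr{g}}\cap\fr{h}$ from Theorem \ref{radicalth}, with the reductive case using $\fr{r}_{\fr{g}}=\fr{z}(\fr{g})$. The only cosmetic difference is that you verify condition (iii) of Proposition \ref{Leviconsequence} directly, whereas the paper verifies condition (ii), but these are trivially equivalent here.
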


\begin{proof} If $\fr{h}$ is an ideal of $\fr{g}$ then it is trivially a subideal of $\fr{g}$.  Conversely, assume that $\fr{h}$ is a subideal of $\fr{g}$.  For (i), if $\fr{r}_{\fr{g}}$ is contained in $\fr{h}$, Theorem \ref{radicalth} yields $\fr{r}_{\fr{h}}=\fr{r}_{\fr{g}}$.  Since $\fr{r}_{\fr{g}} \unlhd \fr{g}$, Proposition \ref{Leviconsequence} implies that $\fr{h}\unlhd \fr{g}$.  For (ii), if $\fr{g}$ is reductive then $\fr{r}_{\fr{g}}=\fr{z}(\fr{g})$. Therefore $\fr{r}_{\fr{h}}=\fr{h}\cap \fr{r}_{\fr{g}}\subseteq \fr{z}(\fr{g})$ and thus $\fr{r}_{\fr{h}}$ is an ideal of $\fr{g}$. We conclude from Proposition \ref{Leviconsequence} that $\fr{h}$ is an ideal of $\fr{g}$.\end{proof}

We remark that under assumption (ii), Corollary \ref{newell} also follows from the results in \cite{Ge-Mu} and \cite{Va}.

\begin{corol}\label{radicalcentral} Let $\fr{g}$ be a finite-dimensional Lie algebra over a field of characteristic zero and let $\fr{h}$ be a Lie subalgebra of $\fr{g}$.  Assume that any of the following is true:\\

\noindent \emph{(i)} The radical $\fr{r}_{\fr{h}}$ of $\fr{h}$ is contained in the center of $\fr{g}$.\\
\emph{(ii)} The Lie algebra $\fr{h}$ is semisimple.\\

 \noindent Then for any Lie subalgebra $\fr{k}$ of $\fr{g}$, the algebra $\fr{h}$ is a subideal of $\fr{k}$ if and only if $\fr{h}$ is an ideal of $\fr{k}$.\end{corol}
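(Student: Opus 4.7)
The plan is to reduce both cases to the criterion established in Proposition \ref{Leviconsequence}, namely that a subideal $\fr{h}$ of a finite-dimensional Lie algebra (over characteristic zero) is an ideal of the ambient algebra $\fr{k}$ precisely when $[\fr{r}_{\fr{h}},\fr{k}]\subseteq \fr{h}$. The reverse implication of the corollary (ideal implies subideal) is immediate from the definition, so all the content lies in the forward direction.

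First, I would fix an arbitrary Lie subalgebra $\fr{k}$ of $\fr{g}$ and assume that $\fr{h}$ is a subideal of $\fr{k}$. In case (i), since $\fr{r}_{\fr{h}}\subseteq \fr{z}(\fr{g})$ and $\fr{k}\subseteq \fr{g}$, we get
\[
[\fr{r}_{\fr{h}},\fr{k}] \subseteq [\fr{z}(\fr{g}),\fr{g}] = \{0\} \subseteq \fr{h},
\]
so condition (iii) of Proposition \ref{Leviconsequence} applied to the subideal $\fr{h}$ of $\fr{k}$ yields $\fr{h}\unlhd \fr{k}$.

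In case (ii), semisimplicity of $\fr{h}$ forces $\fr{r}_{\fr{h}}=\{0\}$, so trivially $[\fr{r}_{\fr{h}},\fr{k}]=\{0\}\subseteq \fr{h}$, and again Proposition \ref{Leviconsequence} gives $\fr{h}\unlhd \fr{k}$.

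There is no real obstacle here; the work has been done in Theorem \ref{radicalth} and Proposition \ref{Leviconsequence}, and this corollary is simply the observation that the hypothesis $[\fr{r}_{\fr{h}},\fr{k}]\subseteq \fr{h}$ is automatically satisfied either when the radical $\fr{r}_{\fr{h}}$ is killed by brackets with everything in $\fr{g}$ (hypothesis (i)) or when that radical is zero to begin with (hypothesis (ii)). The only minor point to note in the write-up is that the conclusion holds uniformly in $\fr{k}$, because the estimate on $[\fr{r}_{\fr{h}},\fr{k}]$ in both cases is derived from data intrinsic to the pair $(\fr{h},\fr{g})$ and does not depend on the particular intermediate subalgebra $\fr{k}$.
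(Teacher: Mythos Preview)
Your proposal is correct and follows essentially the same approach as the paper: in both cases you verify $[\fr{r}_{\fr{h}},\fr{k}]=\{0\}$ (using $\fr{r}_{\fr{h}}\subseteq \fr{z}(\fr{g})$ in case (i) and $\fr{r}_{\fr{h}}=\{0\}$ in case (ii)) and then invoke Proposition~\ref{Leviconsequence} with $\fr{k}$ playing the role of the ambient algebra. The paper's proof is the same one-line reduction, just stated more tersely.
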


\begin{proof} In both cases (i) and (ii) we have $[\fr{r}_{\fr{h}}, \fr{k}]=\{0\}$ (for case (ii), we have $\fr{r}_{\fr{h}}=\{0\}$), and the conclusion follows from Proposition \ref{Leviconsequence}. \end{proof}

\noindent Observe that under assumption (ii), the conclusion of Corollary \ref{radicalcentral} also follows from Theorem \ref{mainperfect2}, given that any finite-dimensional semisimple Lie algebra in characteristic zero is perfect.  

In the sequel, we set $\mathbb K=\mathbb R$ and we work with subalgebras $\fr{h}$ of $\fr{g}$ such that the operators $\op{ad}_X$, $X\in \fr{h}$, are skew-symmetric with respect to some bilinear and symmetric form of $\fr{g}$. A general picture is painted by the following.

\begin{theorem}\label{picture}Let $\fr{g}$ be a finite-dimensional real Lie algebra supplied with a symmetric bilinear form $B$, and let $\fr{h}$ be a Lie subalgebra of $\fr{g}$ such that the following hold:\\

\noindent \emph{(i)} The form $B$ is non-degenerate on $\fr{h}$ and positive definite on the $B$-orthogonal complement $\fr{m}$ of $\fr{h}$ in $\fr{g}$.\\

\noindent \emph{(ii)} Any operator $\op{ad}_X$, $X\in \fr{h}$, is skew-symmetric with respect to $B$, i.e. 

\[  B(\op{ad}_XY,Z)+B(Y,\op{ad}_XZ)=0, \ \makebox{for all} \ X\in \fr{h}, \ Y,Z\in \fr{g}.\]

\noindent Then for any Lie subalgebra $\fr{k}$ of $\fr{g}$, the algebra $\fr{h}$ is a subideal of $\fr{k}$ if and only if $\fr{h}$ is an ideal of $\fr{k}$.  \end{theorem}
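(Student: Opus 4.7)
The strategy is to proceed by induction on the length of the subideal chain, using a squared-adjoint argument to convert an algebraic containment into a norm identity. The key preliminary observation is that $\op{ad}_X$ preserves $\fr{m}$ for every $X\in\fr{h}$: letting $Z\in\fr{m}$ and denoting by $U$ the $\fr{h}$-component of $[X,Z]$ in the decomposition $\fr{g}=\fr{h}\oplus\fr{m}$ afforded by condition (i), we have for every $Y\in\fr{h}$
\[ B(U,Y)=B([X,Z],Y)=-B(Z,[X,Y])=0, \]
where the second equality uses condition (ii) and the third uses $[X,Y]\in\fr{h}$ together with $B(\fr{h},\fr{m})=0$. Non-degeneracy of $B|_{\fr{h}}$ then forces $U=0$, so $[X,Z]\in\fr{m}$.

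The implication ``$\fr{h}\unlhd\fr{k}$ implies $\fr{h}$ is a subideal of $\fr{k}$'' is trivial. For the converse, fix a chain $\fr{h}=\fr{l}_0\unlhd\fr{l}_1\unlhd\cdots\unlhd\fr{l}_n=\fr{k}$ and argue by induction on $n$. It suffices to upgrade any length-two segment $\fr{h}\unlhd\fr{l}_1\unlhd\fr{l}_2$ into $\fr{h}\unlhd\fr{l}_2$, because the chain then shortens to $\fr{h}\unlhd\fr{l}_2\unlhd\cdots\unlhd\fr{l}_n$ and the induction hypothesis applies. For this, pick $X\in\fr{h}$ and write $W=W_{\fr{h}}+W_{\fr{m}}$ for arbitrary $W\in\fr{l}_2$, where $W_{\fr{h}}\in\fr{h}\subseteq\fr{l}_2$ and consequently $W_{\fr{m}}\in\fr{l}_2\cap\fr{m}$. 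Since $[X,W_{\fr{h}}]\in\fr{h}$ automatically and $[X,W_{\fr{m}}]\in\fr{m}$ by the preliminary observation, the task reduces to showing $[X,W_{\fr{m}}]=0$.

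Set $Y:=W_{\fr{m}}$. From $X\in\fr{l}_1$, $\fr{l}_1\unlhd\fr{l}_2$, and the $\op{ad}_X$-invariance of $\fr{m}$, we have $[X,Y]\in\fr{l}_1\cap\fr{m}$; applying $\op{ad}_X$ once more and using $\fr{h}\unlhd\fr{l}_1$ gives $\op{ad}_X^2Y\in\fr{h}$, while invariance of $\fr{m}$ also places $\op{ad}_X^2Y$ in $\fr{m}$, so $\op{ad}_X^2Y\in\fr{h}\cap\fr{m}=\{0\}$. Skew-symmetry of $\op{ad}_X$ then yields
\[ 0=B(\op{ad}_X^2Y,Y)=-B(\op{ad}_XY,\op{ad}_XY), \]
and since $\op{ad}_XY\in\fr{m}$ with $B|_{\fr{m}}$ positive definite, this forces $[X,Y]=0$, completing the induction.

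The decisive step is the preliminary observation that $\op{ad}_X$ preserves $\fr{m}$; it is this invariance, combined with positive definiteness of $B|_{\fr{m}}$, that permits the squared-operator trick to turn the algebraic identity $\op{ad}_X^2Y=0$ into a norm equation whose only solution is zero. Once the invariance is in place, the inductive telescoping through the ideal relations $\fr{h}\unlhd\fr{l}_1$ and $\fr{l}_1\unlhd\fr{l}_2$ is essentially mechanical.
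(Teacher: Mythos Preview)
Your proof is correct and, like the paper, reduces everything to the two-step case $\fr{h}\unlhd\fr{l}_1\unlhd\fr{l}_2$, but the mechanisms differ. The paper works inside $\fr{l}_1$: it writes $\fr{l}_1=\fr{h}\oplus\fr{m}_1$ with $\fr{m}_1\subseteq\fr{m}$, observes directly that $[\fr{h},\fr{m}_1]\subseteq\fr{h}\cap\fr{m}_1=\{0\}$ (combining $\fr{h}\unlhd\fr{l}_1$ with the skew-symmetry inclusion $[\fr{h},\fr{m}_1]\subseteq\fr{m}_1$), and then uses one more application of skew-symmetry to show $[\fr{h},\fr{l}_2]\subseteq\fr{l}_1$ is $B$-orthogonal to $\fr{m}_1$, hence lands in $\fr{h}$; positive definiteness enters only to ensure $\fr{m}_1$ carries a non-degenerate form. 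You instead establish the global invariance $[\fr{h},\fr{m}]\subseteq\fr{m}$ up front and then run a squared-adjoint argument: from $\op{ad}_X^2 Y\in\fr{h}\cap\fr{m}=\{0\}$ you deduce $B(\op{ad}_X Y,\op{ad}_X Y)=0$ and use definiteness on $\fr{m}$ to kill $\op{ad}_X Y$. This actually proves the slightly stronger fact $[\fr{h},\fr{l}_2\cap\fr{m}]=\{0\}$. The paper's route is a touch more direct (no iterated $\op{ad}_X$), while yours isolates the reusable lemma that $\fr{m}$ is $\op{ad}_{\fr{h}}$-stable and makes the role of positive definiteness more explicit via the norm identity.
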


\begin{proof} If $\fr{h}$ is an ideal of $\fr{k}$, then it is trivially a subideal of $\fr{k}$. To prove the converse statement, notice that since the subalgebra $\fr{k}$ of $\fr{g}$ is arbitrary, by induction it suffices to prove that for any subalgebras $\fr{k}_1$ and $\fr{k}_2$ of $\fr{g}$, the relation $\fr{h}\unlhd \fr{k}_1\unlhd \fr{k}_2$ implies that $\fr{h}\unlhd \fr{k}_2$.

To this end, let $\fr{k}_1$ and $\fr{k}_2$ be subalgebras of $\fr{g}$ such that $\fr{h}\unlhd \fr{k}_1\unlhd \fr{k}_2$.  Since $B$ is non-degenerate on $\fr{h}$, we can write $\fr{k}_1$ as the vector space direct sum

\[ \fr{k}_1=\fr{h}+\fr{m}_1, \]

\noindent where $\fr{m}_1$ is the $B$-orthogonal complement of $\fr{h}$ in $\fr{k}_1$.  Given that $\fr{m}$ is the $B$-orthogonal complement of $\fr{h}$ in $\fr{g}$, we have $\fr{m}_1\subseteq \fr{m}$. Since $B$ is positive definite on $\fr{m}$, it is also positive definite on $\fr{m}_1$, and thus the Lie algebra $\fr{h}$ is identified with the $B$-orthogonal complement of $\fr{m}_1$ in $\fr{k}_1$.  Moreover,  

\begin{equation}\label{eq1} [\fr{h}, \fr{m}_1]=\{0\}.\end{equation}

\noindent To see this, observe on the one hand that 

\begin{equation}\label{him}[\fr{h}, \fr{m}_1]\subseteq \fr{h},\end{equation}

\noindent as $\fr{h}$ is an ideal of $\fr{k}_1$. On the other hand, for all $X,Y \in \fr{h}$ and $Z\in \fr{m}_1$, the skew-symmetry condition on $B$ yields $B([X,Z], Y)=-B(Z,[X,Y])\in B(\fr{m}_1,\fr{h})=\{0\}$. Hence we also have $B([\fr{h}, \fr{m}_1],\fr{h})=\{0\}$, or equivalently $[\fr{h}, \fr{m}_1]\subseteq \fr{m}_1$ which, in view of relation \eqref{him} and the fact that $\fr{h}\cap \fr{m}_1=\{0\}$, yields relation \eqref{eq1}.

To show that $\fr{h}$ is an ideal of $\fr{k}_2$, notice that $[\fr{h},\fr{k}_2]\subseteq \fr{k}_1$, given that $\fr{k}_1\unlhd \fr{k}_2$. To conclude that $[\fr{h},\fr{k}_2]\subseteq \fr{h}$, it remains to show that $[\fr{h},\fr{k}_2]$ is $B$-orthogonal to $\fr{m}_1$.  To this end, by virtue of the skew-symmetry condition on $B$ and relation \eqref{eq1}, for all $X\in \fr{h}$, $Y\in \fr{k}_2$ and $Z\in \fr{m}_1$, we have 

\[ B([X,Y],Z)=-B(Y,[X,Z])=-B(Y,0)=0,\]

\noindent which yields the desired result that $B([\fr{h},\fr{k}_2],\fr{m}_1)=\{0\}$.\end{proof}

We proceed to give some examples of subalgebras $\fr{h}$ satisfying the assumptions of Theorem \ref{picture}.

\subsection*{Subalgebras of compact Lie algebras} Let $\fr{g}$ be a compact Lie algebra in the sense that $\fr{g}$ is the (real) Lie algebra of a compact Lie group $G$. It is well-known that $\fr{g}$ admits an inner product $B$ such that any operator $\op{ad}_X$, $X\in \fr{g}$, is skew-symmetric with respect to $B$. Therefore, any subalgebra $\fr{h}$ of $\fr{g}$ trivially satisfies the assumptions of Theorem \ref{picture}. Hence, we have the following.

\begin{corol}\label{CorollaryCompact}Let $\fr{h}$ be a Lie subalgebra of a compact Lie algebra $\fr{g}$.  Then for any Lie subalgebra $\fr{k}$ of $\fr{g}$, the Lie algebra $\fr{h}$ is a subideal of $\fr{k}$ if and only if $\fr{h}$ is an ideal of $\fr{k}$. \end{corol}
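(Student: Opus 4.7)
The plan is to deduce this corollary as a direct application of Theorem \ref{picture}. The only real content is to produce an inner product $B$ on $\fr{g}$ meeting the two hypotheses of that theorem, after which the conclusion is automatic for every subalgebra $\fr{h}$ of $\fr{g}$ at once.

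First I would invoke the standard fact that a compact Lie algebra, i.e. the Lie algebra of a compact Lie group $G$, carries an $\Ad(G)$-invariant inner product $B$. The usual construction is to pick any inner product on $\fr{g}$ and average it against the Haar measure on $G$; differentiating the $\Ad(G)$-invariance of the averaged form $B$ at the identity yields the infinitesimal invariance
\[
B(\op{ad}_XY,Z)+B(Y,\op{ad}_XZ)=0\quad\text{for all }X,Y,Z\in\fr{g}.
\]
In particular this skew-symmetry holds for every $X\in \fr{h}$, giving condition (ii) of Theorem \ref{picture}.

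Next I would check condition (i). Since $B$ is positive definite on the whole of $\fr{g}$, its restriction to any subspace is positive definite, hence non-degenerate on $\fr{h}$; and the $B$-orthogonal complement $\fr{m}$ of $\fr{h}$ in $\fr{g}$ then exists and inherits positive definiteness. Thus both hypotheses of Theorem \ref{picture} are satisfied, and applying the theorem to the given subalgebra $\fr{k}\leq\fr{g}$ shows that any subideal relation $\fr{h}\unlhd\cdots\unlhd\fr{k}$ collapses to $\fr{h}\unlhd\fr{k}$; the converse direction is trivial.

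There is no real obstacle here: the whole content is that compactness furnishes an $\op{ad}$-invariant inner product, after which Theorem \ref{picture} does all the work. The only point worth emphasizing is that positive definiteness on all of $\fr{g}$ (rather than merely non-degeneracy) is what guarantees the definiteness requirement on $\fr{m}$ demanded by the theorem, and this is automatic from the averaging construction since Haar averaging preserves positive definiteness.
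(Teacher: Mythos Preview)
Your proposal is correct and follows essentially the same argument as the paper: one takes an $\Ad$-invariant (hence $\op{ad}$-skew-symmetric) inner product on the compact Lie algebra $\fr{g}$, observes that positive definiteness makes both hypotheses of Theorem~\ref{picture} hold automatically for any subalgebra $\fr{h}$, and applies that theorem. The paper presents this more tersely in the paragraph preceding the corollary, but the content is identical.
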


\subsection*{Compactly embedded Lie subalgebras}  A Lie subalgebra $\fr{h}$ of a real Lie algebra $\fr{g}$ is said to be \emph{compactly embedded} in $\fr{g}$ if there exists an inner product $B$ on $\fr{g}$ with respect to which any operator $\op{ad}_X$, $X\in \fr{h}$, is skew-symmetric. Equivalently, the closure of the group $\exp(\op{ad}_{\fr{h}})$ is compact in the group $\op{Aut}(\fr{g})$ of automorphisms of $\fr{g}$ (\cite{HilHof}, \cite{Wu}). For example, if $G$ is a Lie group and $H$ is a compact Lie subgroup of $G$, then the Lie algebra $\fr{h}$ of $H$ is compactly embedded in the Lie algebra $\fr{g}$ of $G$.  This can be proven (c.f. \cite{Ch-Eb}, proof of Proposition 3.16, p. 61) by choosing any inner product $\langle \ ,\ \rangle$ on $\fr{g}$ and a right-invariant volume form $\omega$ on $\op{Ad}_H$, induced from a right-invariant metric, where $\op{Ad}:G\rightarrow \op{Aut}(\fr{g})$ denotes the adjoint representation of $G$. Then the form $B:\fr{g}\times \fr{g}\rightarrow \mathbb R$, given by 

\begin{equation*}\label{form}B( X, Y):=\int_{\op{Ad}_H}{\langle \op{Ad}_hX,\op{Ad}_hY\rangle}\ \omega(h),\quad X,Y\in \fr{g},\end{equation*}

\noindent is well-defined and constitutes an $\op{Ad}_H$-invariant inner product.  As a result, any operator $\op{ad}_X$, $X\in \fr{h}$, is skew-symmetric with respect to $B$.  Moreover, if $(G/H,g)$ is a \emph{Riemannian homogeneous space} then the isotropy subgroup $H$ is compact (\cite{Nom}), and thus the Lie algebra $\fr{h}$ of the isotropy subgroup $H$ is compactly embedded in the Lie algebra $\fr{g}$ of $G$.

It is straightforward to confirm that any compact Lie algebra is compactly embedded in itself. Moreover, if $\fr{h}$ is compactly embedded in $\fr{g}$, then it is compactly embedded in any Lie subalgebra of $\fr{g}$ containing $\fr{h}$. Finally, if $\fr{h}$ is compactly embedded in $\fr{g}$ then any Lie subalgebra of $\fr{h}$ is also compactly embedded in $\fr{g}$. In view of the above discussion, Theorem \ref{picture} yields the following.

\begin{corol}\label{CorollaryCompactly}Let $\fr{g}$ be a real Lie algebra and let $\fr{h}$ be a compactly embedded subalgebra of $\fr{g}$.  Then for any Lie subalgebras $\fr{h}^{\prime}$ and $\fr{g}^{\prime}$ of $\fr{h}$ and $\fr{g}$ respectively, the algebra $\fr{h}^{\prime}$ is a subideal of $\fr{g}^{\prime}$ if and only if $\fr{h}^{\prime}$ is an ideal of $\fr{g}^{\prime}$. In particular, any compactly embedded subideal is an ideal.\end{corol}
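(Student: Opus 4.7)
The plan is to reduce the corollary to a direct application of Theorem \ref{picture}. The key observation is that the hypotheses of that theorem are hereditary under passage to subalgebras on both sides, provided the witnessing bilinear form is positive definite on the entire ambient Lie algebra; compact embedding furnishes exactly such a form.

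First I would fix an inner product $B$ on $\fr{g}$ with respect to which every $\op{ad}_X$, $X\in\fr{h}$, is skew-symmetric, as guaranteed by the definition of compact embedding. Then I would form the restriction $B':=B|_{\fr{g}'\times \fr{g}'}$. Since $B$ is positive definite on $\fr{g}$, the form $B'$ is positive definite on $\fr{g}'$, hence non-degenerate on $\fr{h}'$ and positive definite on the $B'$-orthogonal complement $\fr{m}'$ of $\fr{h}'$ in $\fr{g}'$; this verifies hypothesis (i) of Theorem \ref{picture}. For hypothesis (ii), note that for any $X\in\fr{h}'\subseteq \fr{h}\cap \fr{g}'$, the operator $\op{ad}_X$ preserves $\fr{g}'$ because $\fr{g}'$ is a subalgebra containing $X$, so $\op{ad}_X|_{\fr{g}'}$ is a well-defined endomorphism of $\fr{g}'$; and the identity $B(\op{ad}_XY,Z)+B(Y,\op{ad}_XZ)=0$ transfers verbatim to $B'$ for $Y,Z\in\fr{g}'$, since $X\in\fr{h}$.

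With both hypotheses in place for the triple $(\fr{g}',B',\fr{h}')$, Theorem \ref{picture} immediately gives: for every Lie subalgebra $\fr{k}$ of $\fr{g}'$, $\fr{h}'$ is a subideal of $\fr{k}$ if and only if $\fr{h}'$ is an ideal of $\fr{k}$. Specializing to $\fr{k}=\fr{g}'$ proves the main assertion, and the ``in particular'' clause follows by taking $\fr{h}'=\fr{h}$ and $\fr{g}'=\fr{g}$. I do not anticipate any genuine obstacle: the corollary is a bookkeeping reduction whose entire nontrivial content already resides in Theorem \ref{picture}, and the remark preceding the statement (that compact embedding descends to subalgebras and persists in intermediate ambient algebras) is precisely what licenses the reduction.
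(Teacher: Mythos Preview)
Your proposal is correct and follows essentially the same approach as the paper: both arguments verify the hypotheses of Theorem \ref{picture} for the pair $(\fr{g}',\fr{h}')$ by restricting the inner product witnessing compact embedding, which is exactly what the paper means when it says ``$\fr{h}'$ is compactly embedded in $\fr{g}'$'' (using the heredity remarks preceding the corollary). The only cosmetic difference is that you spell out the restriction $B'=B|_{\fr{g}'\times\fr{g}'}$ explicitly, whereas the paper invokes the heredity of compact embedding as a black box.
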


\begin{proof}If $\fr{h}$ is compactly embedded in $\fr{g}$ then $\fr{h}^{\prime}$ is compactly embedded in $\fr{g}^{\prime}$.  In other words, there exists an inner product $B$ in $\fr{g}^{\prime}$ with respect to which any operator $\op{ad}_X$, $X\in \fr{h}^{\prime}$, is skew-symmetric.  Thus condition (ii) of Theorem \ref{picture} is satisfied.  Condition (i) of Theorem \ref{picture} is also satisfied since $B$ is an inner product.   Therefore, $\fr{h}^{\prime}$ is a subideal of $\fr{g}^{\prime}$ if and only if $\fr{h}^{\prime}$ is an ideal of $\fr{g}^{\prime}$. \end{proof}

\subsection*{Subalgebras containing an eigenspace of a Cartan involution}  Let $\fr{g}$ be a real semisimple Lie algebra with Killing form $B$ and let $\theta:\fr{g}\rightarrow \fr{g}$ be a \emph{Cartan involution} of $\fr{g}$, i.e. an automorphism of $\fr{g}$ with $\theta^2=\op{id}$ and such that the symmetric bilinear form $\langle X,Y\rangle =-B(X,\theta(Y))$ is positive definite.  Then one obtains the Cartan decomposition $\fr{g}=\fr{u}+ \fr{p}$ with respect to $\langle \ ,\ \rangle$, where $\fr{u}$ is a compact Lie subalgebra of $\fr{g}$ that coincides with the $+1$-eigenspace of $\theta$, i.e. $\theta(X)=X$ for all $X\in \fr{u}$, and $\fr{p}$ is a vector space that coincides with the $-1$-eigenspace of $\theta$, i.e. $\theta(X)=-X$ for all $X\in \fr{p}$ (c.f. \cite{Helg} p. 185). We have the following.

\begin{corol}\label{CartanSemi}Let $\fr{g}$ be a real finite-dimensional semisimple Lie algebra and let $\theta$ be a Cartan involution of $\fr{g}$.  Let $\fr{u}$ be the $+1$-eigenspace of $\theta$ and let $\fr{p}$ be the $-1$-eigenspace of $\theta$.   Assume that $\fr{h}$ is a Lie subalgebra of $\fr{g}$ containing either $\fr{u}$ or $\fr{p}$. Then for any Lie subalgebra $\fr{k}$ of $\fr{g}$, the algebra $\fr{h}$ is a subideal of $\fr{k}$ if and only if $\fr{h}$ is an ideal of $\fr{k}$. \end{corol}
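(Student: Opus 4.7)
The plan is to reduce the corollary to Theorem \ref{picture}, applied with $B$ equal to the Killing form $B_K$ of $\fr{g}$ or its negative. Condition (ii) of Theorem \ref{picture}---skew-symmetry of $\op{ad}_X$ for $X\in \fr{h}$---will come for free from the $\op{ad}$-invariance $B_K([X,Y],Z)+B_K(Y,[X,Z])=0$ of the Killing form, independently of the hypothesis on $\fr{h}$. The whole argument then reduces to verifying condition (i), that $B$ is non-degenerate on $\fr{h}$ and positive definite on the $B$-orthogonal complement $\fr{m}$ of $\fr{h}$ in $\fr{g}$.

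Next I would observe that the assumption forces $\fr{h}$ to be $\theta$-stable and to split along the Cartan decomposition. If $\fr{u}\subseteq \fr{h}$, then for any $X=X_u+X_p\in \fr{h}$ the component $X_p=X-X_u$ lies in $\fr{h}$, giving $\fr{h}=\fr{u}\oplus(\fr{h}\cap\fr{p})$; the case $\fr{p}\subseteq \fr{h}$ yields analogously $\fr{h}=(\fr{h}\cap\fr{u})\oplus \fr{p}$. Since $\fr{u}\perp_{B_K}\fr{p}$, in either case the $B_K$-orthogonal complement $\fr{m}$ of $\fr{h}$ sits entirely in the complementary eigenspace of $\theta$: $\fr{m}\subseteq \fr{p}$ in the first case and $\fr{m}\subseteq \fr{u}$ in the second.

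The final step is a sign choice, which is the only mildly subtle point of the proof. Recalling that $B_K$ is negative definite on $\fr{u}$ and positive definite on $\fr{p}$, in the case $\fr{u}\subseteq \fr{h}$ I would take $B=B_K$: then $B|_{\fr{m}}$ is positive definite (as $\fr{m}\subseteq \fr{p}$), and $B|_{\fr{h}}$ is non-degenerate as a $B_K$-orthogonal sum of a negative definite piece on $\fr{u}$ and a positive definite piece on $\fr{h}\cap\fr{p}$. In the case $\fr{p}\subseteq \fr{h}$ one instead chooses $B=-B_K$, so that $B|_{\fr{m}}$ becomes positive definite (since $\fr{m}\subseteq \fr{u}$) and $B|_{\fr{h}}$ is likewise non-degenerate. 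Using the Killing form on the nose would fail in this second case, and this is the main obstacle to keep in mind; once the sign is chosen correctly, Theorem \ref{picture} applies and delivers the conclusion.
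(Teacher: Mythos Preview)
Your proposal is correct and follows essentially the same route as the paper: both arguments reduce to Theorem \ref{picture} applied with $B=B_K$ in the case $\fr{u}\subseteq\fr{h}$ and $B=-B_K$ in the case $\fr{p}\subseteq\fr{h}$, verifying condition (ii) from the $\op{ad}$-invariance of the Killing form and condition (i) from the sign behaviour of $B_K$ on $\fr{u}$ and $\fr{p}$. Your use of $\theta$-stability (via $X_p=X-X_u\in\fr{h}$) to obtain the splitting $\fr{h}=\fr{u}\oplus(\fr{h}\cap\fr{p})$ is a slightly slicker phrasing of the paper's $B$-orthogonal decomposition $\fr{h}=\fr{u}+\fr{q}$, but the content is the same.
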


\begin{proof}  Denote by $B$ the Killing form of $\fr{g}$ and let $\fr{g}=\fr{u} +\fr{p}$ be the Cartan decomposition with respect to the inner product $\langle X,Y\rangle=-B(X,\theta(Y))$.  Since any operator $\op{ad}_X$, $X\in \fr{g}$, is skew symmetric with respect to the Killing form $B$, condition (ii) of Theorem \ref{picture} is automatically satisfied.  The spaces $\fr{u}$ and $\fr{p}$ are $B$-orthogonal since for all $X\in \fr{u}$, $Y\in \fr{p}$ we have $B(X,Y)=-B(X,\theta(Y))=\langle X,Y\rangle =0$.  Moreover, since $\langle \ ,\  \rangle$ is positive definite, it follows that $B(X,X)=B(X,\theta(X))=-\langle X,X \rangle <0$ for all $X\in \fr{u}$ and $B(Y,Y)=-B(Y,\theta(Y))=\langle Y,Y \rangle >0$ for all $Y\in \fr{p}$.  In other words, $B$ is negative definite on $\fr{u}$ and positive definite on $\fr{p}$. Therefore, each of these spaces is the $B$-orthogonal complement of the other in $\fr{g}$.

Assume initially that $\fr{u}\subseteq \fr{h}$.  Since $B$ is negative definite on $\fr{u}$ (and thus non-degenerate on $\fr{u}$), we have a $B$-orthogonal vector space direct sum $\fr{h}=\fr{u}+\fr{q}$ where $\fr{q}\subseteq \fr{p}$. Moreover, $B$ is non-degenerate on $\fr{h}$.  To see this, let $Z\in \fr{h}$ such that $B(Z, \fr{h})=\{0\}$; Writing $Z=Z_{\fr{u}}+Z_{\fr{q}}$ with respect to the direct sum $\fr{h}=\fr{u}+\fr{q}$, we obtain $B(Z, Z_{\fr{u}})=0$ and $B(Z, Z_{\fr{q}})=0$ which, in view of the $B$-orthogonality of $\fr{u},\fr{q}$, is equivalent to $B(Z_{\fr{u}}, Z_{\fr{u}})=0$ and $B(Z_{\fr{q}}, Z_{\fr{q}})=0$.  But since $B$ is negative definite on $\fr{u}$ and positive definite on $\fr{q}\subseteq \fr{p}$, the last equations yield $Z_{\fr{u}}=Z_{\fr{q}}=0$, i.e. $Z=0$.  Now given that $B$ is non-degenerate on $\fr{h}$, we can write $\fr{g}$ as the $B$-orthogonal vector space direct sum $\fr{g}=\fr{h}+\fr{m}$, where $\fr{m}$ is the $B$-orthogonal complement of $\fr{h}$ in $\fr{g}$.  Since $\fr{h}$ contains $\fr{u}$, and along with the fact that $\fr{p}$ coincides with the $B$-orthogonal complement of $\fr{u}$ in $\fr{g}$, it follows that $\fr{m}\subseteq \fr{p}$ and hence $B$ is positive definite on $\fr{m}$.  In summary, we have proven that $B$ is non-degenerate on $\fr{h}$ and positive definite on the $B$-orthogonal complement $\fr{m}$ of $\fr{h}$ in $\fr{g}$.  Therefore, $\fr{h}$ satisfies both conditions of Theorem \ref{picture}, concluding the proof for the case $\fr{u}\subseteq \fr{h}$.

Finally, assume that $\fr{p}\subseteq \fr{h}$.  The proof is analogous to the case $\fr{u}\subseteq \fr{h}$, so we describe it briefly.  We consider the form $(-B)$ and the $(-B)$-orthogonal vector space direct sum $\fr{h}=\fr{p}+\fr{q}^{\prime}$, where $\fr{q}^{\prime}\subseteq \fr{u}$. Again, $(-B)$ is non-degenerate on $\fr{h}$, and thus we have $\fr{g}=\fr{h}+\fr{m}^{\prime}$, where $\fr{m}^{\prime}$ is the $(-B)$-orthogonal complement of $\fr{h}$ in $\fr{g}$.  Since $\fr{h}$ contains $\fr{p}$, it follows that $\fr{m}^{\prime}\subseteq \fr{u}$ and hence $(-B)$ is positive definite on $\fr{m}^{\prime}$.  Therefore, $\fr{h}$ satisfies both conditions of Theorem \ref{picture}, concluding the proof for the case $\fr{p}\subseteq \fr{h}$.\end{proof}

\begin{remark}\label{simple}We note that if $\fr{g}$ is simple and $(\fr{g},\fr{u})$ is an irreducible symmetric pair (see e.g. \cite{Helg}) then $\fr{u}$ is a maximal subalgebra of $\fr{g}$, and hence if $\fr{h}$ contains $\fr{u}$ then $\fr{h}=\fr{u}$ or $\fr{h}=\fr{g}$. \end{remark}

\begin{remark} Assume that $\fr{h}$ is contained in the $+1$ eigenspace $\fr{u}$ of a Cartan involution.  Since the Lie subalgebra $\fr{u}$ is compactly embedded in $\fr{g}$, then $\fr{h}$ is also compactly embedded in $\fr{g}$.  Hence Corollary \ref{CorollaryCompactly} implies that for any Lie subalgebras $\fr{h}^{\prime}$ and $\fr{g}^{\prime}$ of $\fr{h}$ and $\fr{g}$ respectively, the algebra $\fr{h}^{\prime}$ is a subideal of $\fr{g}^{\prime}$ if and only if $\fr{h}^{\prime}$ is an ideal of $\fr{g}^{\prime}$.\end{remark}

\section{An application on self-normalizing subalgebras of Lie algebras}\label{GeodesicOrbit}

Let $\fr{h}$ be a Lie subalgebra of a Lie algebra $\fr{g}$.  The normalizer of $\fr{h}$ in $\fr{g}$ is the subalgebra 

\[ N_{\fr{g}}(\fr{h})=\{X\in \fr{g}:[X,Y]\in \fr{h}\ \makebox{for all} \ Y\in \fr{h}\}, \]

\noindent or equivalently, the maximal subalgebra of $\fr{g}$ containing $\fr{h}$ as an ideal. The Lie algebra $\fr{h}$ is called self-normalizing in $\fr{g}$ if $\fr{h}= N_{\fr{g}}(\fr{h})$.  Foremost examples of self-normalizing subalgebras include Cartan subalgebras, as well as subalgebras of maximal rank (such as Borel and parabolic subalgebras) in semisimple Lie algebras (\cite{Bou}). Further classes of self-normalizing subalgebras were studied via Kostant pairs in \cite{Si}.  The following application of our main results yields large classes of self-normalizing subalgebras.

\begin{theorem}\label{SelfNormalizingTh} Let $\fr{h}$ be a Lie subalgebra of a Lie algebra $\fr{g}$ over a field $\mathbb K$.  Assume that any of the following is true:\\

\noindent \emph{(i)} The algebra $\fr{h}$ is perfect.\\
\emph{(ii)} The algebra $\fr{g}$ is finite-dimensional, $\mathbb K$ has characteristic zero and the radical $\fr{r}_{\fr{h}}$ of $\fr{h}$ is contained in the center of $\fr{g}$.\\
\emph{(iii)} The algebras $\fr{g}$ and $\fr{h}$ satisfy the assumptions of Theorem \ref{picture}.  More specifically, 

\noindent \emph{(iv)} The algebra $\fr{g}$ is finite-dimensional and compact with $\mathbb K=\mathbb R$.\\
\emph{(v)} The algebra $\fr{g}$ is finite-dimensional, $\mathbb K=\mathbb R$ and $\fr{h}$ is compactly embedded in $\fr{g}$.\\
 \emph{(vi)} The algebra $\fr{g}$ is finite-dimensional and semisimple, $\mathbb K=\mathbb R$ and $\fr{h}$ contains an eigenspace of a Cartan involution of $\fr{g}$.\\

\noindent Then the normalizer $N_{\fr{g}}(\fr{h})$ is a self-normalizing subalgebra of $\fr{g}$.  \end{theorem}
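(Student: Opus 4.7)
The core observation is that the statement is a direct application of the transitivity results already developed in the paper, combined with the maximality property of the normalizer. My plan is as follows.

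Set $\fr{n}:=N_{\fr{g}}(\fr{h})$ and let $\fr{m}:=N_{\fr{g}}(\fr{n})$. By definition of the normalizer, $\fr{h}\unlhd\fr{n}$ and $\fr{n}\unlhd\fr{m}$, so $\fr{h}$ is a subideal of $\fr{m}$ via the two-step chain
\[
\fr{h}\unlhd\fr{n}\unlhd\fr{m}\subseteq\fr{g}.
\]
The plan is to conclude, under each of the hypotheses (i)--(vi), that this subideal chain forces $\fr{h}\unlhd\fr{m}$. Once this is established, the maximality property of the normalizer (that $N_{\fr{g}}(\fr{h})$ is the largest subalgebra of $\fr{g}$ containing $\fr{h}$ as an ideal) yields $\fr{m}\subseteq N_{\fr{g}}(\fr{h})=\fr{n}$. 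Since the reverse inclusion $\fr{n}\subseteq\fr{m}$ is trivial, we get $\fr{m}=\fr{n}$, i.e.\ $\fr{n}$ is self-normalizing.

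The verification that $\fr{h}\unlhd\fr{m}$ under each hypothesis is a mechanical dispatch to the appropriate result proved earlier: under (i) apply Theorem \ref{mainperfect2}; under (ii) apply Corollary \ref{radicalcentral}(i), since $\fr{r}_{\fr{h}}\subseteq\fr{z}(\fr{g})$ is preserved when passing to the subalgebra $\fr{m}$; under (iii), which directly states the assumptions of Theorem \ref{picture}, apply Theorem \ref{picture} with $\fr{k}=\fr{m}$; under (iv) apply Corollary \ref{CorollaryCompact}; under (v) apply Corollary \ref{CorollaryCompactly}; under (vi) apply Corollary \ref{CartanSemi}. In each case the invoked result is stated precisely for subideals $\fr{h}$ of an arbitrary subalgebra of $\fr{g}$ (or of all $\fr{g}$), so the application is immediate with the chosen $\fr{m}$.

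There is essentially no technical obstacle, since all the heavy lifting has been carried out in Sections \ref{SecPer} and \ref{SecCom}; the only small point to watch is that in cases (ii), (v) and (vi), the \emph{inherited} conditions on $\fr{h}\subseteq\fr{m}$ (radical contained in the center of $\fr{m}$, compact embedding in $\fr{m}$, containing an eigenspace of a Cartan involution viewed inside $\fr{m}$) are automatic from the corresponding conditions relative to the ambient $\fr{g}$, so that the relevant corollaries do apply to the pair $(\fr{h},\fr{m})$. Organizing the proof as a single paragraph that establishes the subideal chain $\fr{h}\unlhd\fr{n}\unlhd\fr{m}$, followed by a short case distinction invoking the six results, seems the cleanest presentation.
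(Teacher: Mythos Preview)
Your proposal is correct and follows essentially the same approach as the paper: form the chain $\fr{h}\unlhd N_{\fr{g}}(\fr{h})\unlhd N_{\fr{g}}\big(N_{\fr{g}}(\fr{h})\big)$, invoke the appropriate transitivity result from cases (i)--(vi) to get $\fr{h}\unlhd N_{\fr{g}}\big(N_{\fr{g}}(\fr{h})\big)$, and then use maximality of the normalizer. Your remark about ``inherited'' conditions is harmless but superfluous, since Corollaries \ref{radicalcentral}, \ref{CorollaryCompactly}, and \ref{CartanSemi} are already formulated for arbitrary subalgebras $\fr{k}\subseteq\fr{g}$, so no transfer argument is needed.
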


\begin{proof}  Consider the subideal series $\fr{h}\unlhd  N_{\fr{g}}(\fr{h})\unlhd  N_{\fr{g}}\big(N_{\fr{g}}(\fr{h})\big)$.  In any of the cases (i)--(vi) respectively, Theorem \ref{mainperfect2}, Corollary \ref{radicalcentral}, Theorem \ref{picture}, Corollary \ref{CorollaryCompact}, Corollary \ref{CorollaryCompactly} and Corollary \ref{CartanSemi} respectively yield $\fr{h}\unlhd N_{\fr{g}}\big(N_{\fr{g}}(\fr{h})\big)$.  Since $N_{\fr{g}}(\fr{h})$ is the maximal subalgebra of $\fr{g}$ containing $\fr{h}$ as an ideal, it follows that $N_{\fr{g}}(\fr{h})=N_{\fr{g}}\big(N_{\fr{g}}(\fr{h})\big)$, i.e. $N_{\fr{g}}(\fr{h})$ is a self-normalizing subalgebra of $\fr{g}$.\end{proof}

\end{document}